\newtheorem{theorem}{Theorem}[section]
\newtheorem{lemma}[theorem]{Lemma}
\newtheorem{conjecture}[theorem]{Conjecture}
\newtheorem{claim}[theorem]{Claim}
\newcommand{\norm}[1]{\left\lVert#1\right\rVert}
\newcommand{\floor}[1]{\left\lfloor#1\right\rfloor}
\title{Packing edge disjoint cliques in graphs}
\author{    J\'ozsef Balogh\footnote{
    Department of Mathematics, University of Illinois at Urbana--Champaign, Urbana, IL, USA. Research supported in part by NSF grants RTG DMS-1937241, FRG DMS-2152488, the Arnold O.~Beckman Research Award (UIUC Campus Research Board RB 24012), and the Simons Fellowship. E-mail: jobal@illinois.edu.} \and     Michael C. Wigal\footnote{Department of Mathematics, University of Illinois at Urbana--Champaign, Urbana, IL, USA. Research supported in part by NSF RTG DMS-1937241 and an AMS-Simons Travel Grant. E-mail: wigal@illinois.edu.}}
\begin{document}
\maketitle

\begin{abstract}
    Let $r \ge 3$ be fixed and $G$ be an $n$-vertex graph. A long-standing conjecture of Gy{\H o}ri states that if $e(G) = t_{r-1}(n) + k$, where $t_{r-1}(n)$ denotes the number of edges of the Tur{\'a}n graph on $n$ vertices and $r - 1$ parts, then $G$ has at least $(2 - o(1))k/r$ edge disjoint $r$-cliques. We prove this conjecture.\\

\textbf{Mathematics Subject Classification (2020):} 05B40 05C70 05C72
\end{abstract}

\section{Introduction}

Denote by $t_r(n)$ the number of edges of the Tur{\'a}n graph on $n$ vertices and $r$ parts, which is the  complete $r$-partite graph with class sizes either $\lfloor n/r \rfloor$ or $\lceil n/r\rceil$. 
A classical theorem of Erd{\H o}s, Goodman and P\'osa \cite{ErdosGoodmanPosa1966} states that the edge set of a graph $G$ can be covered by at most  $t_2(n)$  edges and triangles.  Bollob{\'a}s \cite{Bollobas1976} later generalized this result  to  $r$-cliques.

\begin{theorem}[$r = 3$ \cite{ErdosGoodmanPosa1966}, $r \ge 4$ \cite{Bollobas1976}]\label{thm:cover}
    Let $G$ be a graph and let $r \ge 3$. Then there exists a covering of  the edge set of $G$, consisting of $r$-cliques and edges, of size at most $t_{r-1}(n)$.
\end{theorem}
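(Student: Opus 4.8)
The plan is to prove the statement by induction on $n := |V(G)|$, exploiting the recursion $t_{r-1}(n) - t_{r-1}(n-1) = n - 1 - \floor{(n-1)/(r-1)}$, which is exactly the minimum degree of the Tur\'an graph $T_{r-1}(n)$. For $n \le r - 1$ the Tur\'an graph is complete, so $e(G) \le \binom{n}{2} = t_{r-1}(n)$ and the family of all edges of $G$ is the desired cover; this is the base case. For $n \ge r$, I would first dispose of two easy situations. If $G$ has a vertex $v$ with $d(v) \le t_{r-1}(n) - t_{r-1}(n-1)$, then applying the induction hypothesis to $G - v$ gives a cover of size at most $t_{r-1}(n-1)$, to which we append the $\le t_{r-1}(n) - t_{r-1}(n-1)$ edges at $v$ as singleton pieces. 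And if $G$ is $K_r$-free, then $e(G) \le t_{r-1}(n)$ by Tur\'an's theorem and the family of all edges again works. So we may assume $\delta(G) > t_{r-1}(n) - t_{r-1}(n-1)$ and $K_r \subseteq G$.

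Under these assumptions the task reduces to finding a \emph{single} vertex $v$ whose incident edges can be covered by at most $t_{r-1}(n) - t_{r-1}(n-1)$ pieces using $r$-cliques through $v$ and singleton edges; deleting $v$ and invoking the induction hypothesis on $G - v$ then finishes the proof. Since an $r$-clique through $v$ is the same as a $K_{r-1}$ inside $G[N(v)]$, covering the edges at $v$ with $m$ cliques (with vertex union $U \subseteq N(v)$) plus $d(v) - |U|$ singletons costs $m + d(v) - |U|$, and this is minimized by taking a maximum collection of \emph{vertex-disjoint} copies of $K_{r-1}$ in $G[N(v)]$, giving cost $d(v) - (r-2)\,\nu(G[N(v)])$ where $\nu(H)$ denotes the $K_{r-1}$-packing number of $H$. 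So it suffices to establish
\[
  \min_{v \in V(G)} \bigl( d(v) - (r-2)\,\nu(G[N(v)]) \bigr) \ \le\ t_{r-1}(n) - t_{r-1}(n-1).
\]
The hypothesis $\delta(G) > t_{r-1}(n) - t_{r-1}(n-1)$ makes every $G[N(v)]$ a dense graph --- on more than $\tfrac{r-2}{r-1}n$ vertices and with minimum degree more than $\tfrac{r-3}{r-1}n$ --- hence typically abundant in copies of $K_{r-1}$, which is what makes the displayed bound plausible.

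The main obstacle is precisely to verify this inequality when some neighbourhood $G[N(v)]$ is \emph{poor} in $K_{r-1}$'s despite being dense. Here one should invoke stability for Tur\'an's theorem: a dense $K_{r-1}$-packing-poor graph must be structurally close to $T_{r-2}(d(v))$, so the corresponding $v$ behaves like a vertex of $T_{r-1}(n)$; if this happens at \emph{every} vertex, then $G$ is globally close to $T_{r-1}(n)$, and since $G$ still contains a $K_r$ one argues that the excess $e(G) - t_{r-1}(n)$ is small and handles it by hand, building a bounded number of $r$-cliques on the near-Tur\'an skeleton to absorb the excess edges. Pinning down the thresholds in this stability step (and making the ``absorb by hand'' part uniform in $n$) is where the real work lies.

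For $r = 3$ this difficulty disappears and one recovers the classical argument, which it is worth recording. If $G$ has no edge there is nothing to prove; otherwise pick any edge $uv$ and set $A = N(u) \setminus (N(v) \cup \{v\})$, $B = N(v) \setminus (N(u) \cup \{u\})$, $C = N(u) \cap N(v)$. Cover the edge $uv$ together with all edges from $\{u, v\}$ to $C$ by the triangles $\{\,uvw : w \in C\,\}$ (or, if $C = \emptyset$, by the single edge $uv$), and cover the edges from $u$ to $A$ and from $v$ to $B$ by themselves; this uses at most $|A| + |B| + |C| + 1 \le n - 1 = t_2(n) - t_2(n-2)$ pieces. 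Adding these to a cover of $G - u - v$ of size at most $t_2(n-2)$ furnished by induction completes the proof.
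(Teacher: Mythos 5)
The paper does not prove Theorem~\ref{thm:cover} (it cites Erd\H{o}s--Goodman--P\'osa and Bollob\'as), so your proposal stands or falls on its own. Your $r=3$ paragraph is the classical EGP argument and is correct. The general-$r$ part, however, has a genuine gap: after reducing to the case $\delta(G) > t_{r-1}(n)-t_{r-1}(n-1)$ and $K_r\subseteq G$, the entire proof hinges on the displayed inequality $\min_v\bigl(d(v)-(r-2)\,\nu(G[N(v)])\bigr)\le t_{r-1}(n)-t_{r-1}(n-1)$, and you do not prove it. You offer only a plan (``invoke stability for Tur\'an's theorem'', ``absorb by hand''), and you yourself flag that pinning down the thresholds and making the absorption uniform in $n$ ``is where the real work lies''. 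That is the theorem's main content for $r\ge 4$, not a routine verification: stability gives asymptotic, $o(n^2)$-type control for large $n$, while the statement is exact for every $n$, so the sketch as written cannot be spliced into a proof without substantial new argument, and it is not even clear the single-vertex-removal inequality is the right intermediate statement to aim for. (A minor point: ``minimized by a maximum vertex-disjoint family'' is not justified for overlapping clique families, though for your purposes only the upper bound $d(v)-(r-2)\nu$ achieved by disjoint cliques is needed, so this does not affect the logic.)

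For comparison, the classical induction avoids neighborhood packings and stability entirely by deleting $r-1$ vertices at a time rather than one: if $G$ is $K_r$-free, Tur\'an's theorem finishes; otherwise take a copy of $K_r$ and let $S$ be $r-1$ of its vertices. Every vertex $u\notin S$ adjacent to all of $S$ contributes one piece (the $r$-clique $S\cup\{u\}$, and at least one such $u$ exists, which also covers the edges inside $S$), while every other $u$ sends at most $r-2$ edges to $S$, covered by singletons. This covers all edges meeting $S$ with at most $(r-2)(n-r+1)$ pieces, which is at most $t_{r-1}(n)-t_{r-1}(n-r+1)=(r-2)(n-r+1)+\binom{r-1}{2}$, and induction on $G-S$ completes the proof. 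Your $r=3$ argument is exactly the $r=3$ instance of this scheme (deleting the two endpoints of an edge); extending that scheme, rather than the one-vertex-at-a-time reduction, is how the gap should be closed.
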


Many generalizations of Theorem~\ref{thm:cover} have been investigated. For example, one can consider a set of cliques covering  the edges of a graph such that every edge lies in exactly one clique of the cover. Such a cover is called as {\it decomposition} of the edge set. Erd{\H o}s, Goodman, and P\'osa \cite{ErdosGoodmanPosa1966} proved for $r = 3$ that  the edge set of an $n$-vertex graph $G$ can be decomposed into at most $t_2(n)$ edges and triangles. Another possible strengthening is to assign non-uniform `costs' to cliques contained in the decomposition.  Denote $\pi(G)$  the smallest cost for a decomposition of the edge set of a graph $G$ into cliques, where the cost of an $r$-clique is $r$ for every $r$. The following  was conjectured by Katona and Tarj{\'a}n~\cite{kattar}, and proved independently by Gy{\H o}ri and Kostochka \cite{GyoriKostochka1979}, Chung \cite{Chung1981}, and Kahn \cite{Kahn1981}.

\begin{theorem}[Gy{\H o}ri and Kostochka \cite{GyoriKostochka1979}, Chung \cite{Chung1981},  Kahn \cite{Kahn1981}]\label{thm:clique_decomposition_size_cost}
    Let $G$ be an $n$-vertex graph, then
    \[ \pi(G) \le 2\cdot t_2(G).\]
\end{theorem}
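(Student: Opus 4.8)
\medskip
\noindent\textbf{Proof strategy.} The plan is to prove $\pi(G)\le 2t_2(n)$ by induction on $n=|V(G)|$, the cases $n\le 2$ being trivial. If $e(G)\le t_2(n)$ there is nothing to prove, since the decomposition into single edges already costs $2e(G)\le 2t_2(n)$; so assume $e(G)>t_2(n)$. The engine of the induction is a peeling step. Fix a vertex $v$ and a partition of $N_G(v)$ into sets $S_1,\dots,S_s$, each of which is a clique of $G[N_G(v)]$ (the least possible value of $s$ being the clique cover number $\chi(\overline{G[N_G(v)]})$). Then the cliques $\{v\}\cup S_1,\dots,\{v\}\cup S_s$ of $G$ cover, each exactly once, all edges at $v$ together with all edges inside some $S_j$, at total cost $\sum_j(|S_j|+1)=d_G(v)+s$; deleting $v$ and the edges $\bigcup_j\binom{S_j}{2}$ leaves a graph $G'$ on $n-1$ vertices, and any decomposition of $G'$ together with these $s$ cliques is a decomposition of $G$, so
\[ \pi(G)\ \le\ \pi(G')+d_G(v)+s\ \le\ \min\bigl(2t_2(n-1),\ 2e(G')\bigr)+d_G(v)+s, \]
the first bound on $\pi(G')$ being the induction hypothesis and the second the trivial one. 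Since $2t_2(n)-2t_2(n-1)=2\floor{n/2}$ and $e(G')=e(G)-d_G(v)-\sum_j\binom{|S_j|}{2}$, a short calculation shows it suffices to find, for each such $G$, a vertex $v$ and a clique partition of $N_G(v)$ satisfying
\[ d_G(v)+s\ \le\ 2\floor{n/2}\qquad\text{or}\qquad \sum_{j}\bigl(|S_j|^2-1\bigr)\ \ge\ 2\bigl(e(G)-t_2(n)\bigr). \]

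\medskip
\noindent\textbf{The main case.} Take $v$ of minimum degree and put $d:=d_G(v)=\delta(G)$. If $\delta(G)\le\floor{n/2}$, partition $N_G(v)$ into singletons: then $s=d\le\floor{n/2}$, so $d_G(v)+s\le 2\floor{n/2}$ and we are done. So assume $\delta(G)\ge\floor{n/2}+1$. Every $u\in N_G(v)$ has at least $d_G(u)+d_G(v)-n\ge 2\delta(G)-n=2d-n$ common neighbours with $v$, all of them lying in $N_G(v)$; hence $G[N_G(v)]$ has minimum degree at least $2d-n$, so its complement $R_v$ (on $d$ vertices) has maximum degree at most $n-1-d$. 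Colouring $R_v$ greedily produces a clique partition of $N_G(v)$ with $s=\chi(R_v)\le n-d$ parts, whence $d_G(v)+s\le n$. If $n$ is even then $n=2\floor{n/2}$ and we are done. If $n$ is odd we are short by exactly one, and Brooks' theorem supplies $\chi(R_v)\le\Delta(R_v)\le n-1-d$, hence $d_G(v)+s\le n-1=2\floor{n/2}$, unless some component of $R_v$ is a complete graph or an odd cycle.

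\medskip
\noindent\textbf{The obstacle.} The entire difficulty sits in this short list of Brooks-exceptional configurations for odd $n$. If a component of $R_v$ is a clique $K_{n-d}$, then $N_G(v)$ contains a set $W$ with $|W|=n-d$ that is independent in $G$ and complete to $N_G(v)\setminus W$, and a degree count (each $w\in W$ has at most $d$, hence exactly $\delta(G)=d$, neighbours) forces $G$ to be a join $\overline{K_{n-d}}\vee H$ with $|V(H)|=d$ and $\delta(H)\ge 2d-n$; the odd-cycle case ($\Delta(R_v)=2$, so $d\le n-3$) likewise pins down the structure of $G$ near $v$. I expect the resolution of these rigid near-extremal cases --- where the first alternative $d_G(v)+s\le 2\floor{n/2}$ genuinely fails by $1$, so that one must instead verify the surplus alternative $\sum_j(|S_j|^2-1)\ge 2(e(G)-t_2(n))$, or recurse on the join structure with $d$ strictly decreasing, or decompose $G$ by hand --- to be the main obstacle and the bulk of the work. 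Everything else is routine: the peeling reduction, the common-neighbourhood inequality $\delta(G[N_G(v)])\ge 2\delta(G)-n$, and greedy colouring already settle the sparse case $\delta(G)\le\floor{n/2}$ and the whole dense even case, and the floor/ceiling bookkeeping is mechanical once the structural picture is in place.
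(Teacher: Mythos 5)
Your peeling framework is internally consistent as far as it goes: the cost accounting $\pi(G)\le\pi(G')+d_G(v)+s$, the identity $2t_2(n)-2t_2(n-1)=2\floor{n/2}$, the reformulation of the surplus alternative as $\sum_j(|S_j|^2-1)\ge 2(e(G)-t_2(n))$, and the bounds $\delta(G[N_G(v)])\ge 2\delta(G)-n$, $\Delta(R_v)\le n-1-d$, $\chi(R_v)\le n-d$ all check out, and they do settle the case $\delta(G)\le\floor{n/2}$ and the dense case for even $n$. (For the record, the paper states this result only as a cited classical theorem of Gy{\H o}ri--Kostochka, Chung and Kahn and gives no proof of it, so there is nothing internal to compare against.)

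However, what you submit is not a proof: the odd-$n$ case with $\delta(G)\ge\floor{n/2}+1$ in which Brooks' theorem is blocked --- $R_v$ having a complete component on $\Delta(R_v)+1=n-d$ vertices, or $\Delta(R_v)\le 2$ with an odd-cycle component --- is only catalogued, and you yourself defer its resolution as ``the main obstacle and the bulk of the work.'' That deferred case is precisely where the content of the theorem lives: it contains $K_n$ and the join-type graphs $\overline{K_{n-d}}\vee H$ that are the near-extremal configurations every known proof has to fight. You verify neither that the surplus alternative $\sum_j(|S_j|^2-1)\ge 2(e(G)-t_2(n))$ holds in these configurations in general (it does for $K_n$, but you do not show it, let alone for the full family), nor that the proposed ``recursion on the join structure'' is compatible with the global bound $2t_2(n)$ and terminates (the vertices of the independent part all have minimum degree, so switching the base vertex does not obviously escape the exceptional situation), nor anything at all about the odd-cycle components when $d=n-3$. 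Until that case analysis is actually carried out, the argument proves the theorem only for even $n$ or $\delta(G)\le\floor{n/2}$, and the remaining step is a genuine gap rather than routine bookkeeping.
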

A closely related question of Erd{\H o}s (see \cite[Problem 43]{Tuza2001} or \cite{Gyori1992}) is as follows: if every $r$-clique has cost $r-1$, then can the edge set of an $n$-vertex graph be decomposed into cliques with total cost at most $t_2(n)$? This was recently shown to hold asymptotically by He, Krueger, and Nguyen in a joint work with the authors of this paper~\cite{BHKNW2024}. Another possible strengthening of Theorem~\ref{thm:clique_decomposition_size_cost} is to have restrictions on the possible supports of a decomposition. For $r \ge 3$ denote $\pi_r(G)$  the smallest cost of a decomposition of the edge set of a graph $G$ into $2$-cliques and $r$-cliques, where the cost of a clique is its number of vertices. Recall, as $\pi(G)$ denotes the smallest cost of decomposing edges of a graph $G$ into cliques, clearly, $\pi_r(G) \ge \pi(G)$. Gy{\H o}ri and Tuza \cite{GyoriTuza1987} proved the following. 

\begin{theorem}[Gy{\H o}ri and Tuza \cite{GyoriTuza1987}]\label{thm:gyori_tuza}
    Let $G$ be an $n$-vertex graph and $r \ge 4$. Then 
    \begin{equation}\label{gyorituza} \pi_r(G) \le 2 \cdot t_{r-1}(G).\end{equation}
\end{theorem}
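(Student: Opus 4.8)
The plan is to first convert the statement into a clique-packing bound, and then prove that bound by induction.

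\textbf{Step 1: reduction.} Write $n=|V(G)|$, $t=t_{r-1}(n)$, $k=e(G)-t$, and let $\nu_r(G)$ denote the maximum number of pairwise edge-disjoint copies of $K_r$ in $G$. In a decomposition of $E(G)$ into single edges and copies of $K_r$ that uses $b$ copies of $K_r$, the number of single edges is $e(G)-b\binom{r}{2}$, so the total cost equals $2\big(e(G)-b\binom{r}{2}\big)+rb=2e(G)-b\,r(r-2)$. The $b$ copies must be pairwise edge-disjoint, and conversely any edge-disjoint family of copies of $K_r$ extends to a decomposition by taking the remaining edges as singletons; hence $\pi_r(G)=2e(G)-r(r-2)\,\nu_r(G)$, and \eqref{gyorituza} is equivalent to the packing inequality $\nu_r(G)\ge 2k/\big(r(r-2)\big)$. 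Equivalently, if $R$ denotes the graph (necessarily $K_r$-free, by maximality) obtained from $G$ by deleting the edges of a maximum packing, one wants $e(R)\le t-k/(r-2)$. This is strictly stronger than the bound $e(R)\le t$ coming from Turán's theorem: an extra deficiency of $k/(r-2)$ below the Turán number is needed, and providing it is the real content.

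\textbf{Step 2: the induction.} I would prove $\nu_r(G)\ge 2k/\big(r(r-2)\big)$ by induction on $n$, with small $n$ as base cases. Let $\Delta:=t_{r-1}(n)-t_{r-1}(n-1)=n-\lceil n/(r-1)\rceil=\delta(T_{r-1}(n))$, and recall $\sum_{m=1}^{n}\big(t_{r-1}(m)-t_{r-1}(m-1)\big)=t$. Pick a vertex $v$ of minimum degree $d$. If $d\le\Delta$, delete $v$, cover its $d$ edges as singletons, and apply induction to $G-v$; the total cost is at most $2d+2t_{r-1}(n-1)\le 2\Delta+2t_{r-1}(n-1)=2t$. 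If $d>\Delta$, then every vertex of $G$ has degree larger than $\Delta$, so $G$ is dense; the plan is to locate $m:=\lceil 2(d-\Delta)/(r-2)\rceil$ \emph{pairwise vertex-disjoint} copies of $K_{r-1}$ inside $H:=G[N(v)]$, convert each into a copy of $K_r$ by adding $v$ (these are edge-disjoint, since the underlying $K_{r-1}$'s are vertex-disjoint), delete the $m\binom{r-1}{2}$ edges they use inside $G-v$ and apply induction there, and cover the remaining $d-(r-1)m$ edges at $v$ as singletons. The total cost is then at most $rm+2\big(d-(r-1)m\big)+2t_{r-1}(n-1)=2d-(r-2)m+2t_{r-1}(n-1)\le 2\Delta+2t_{r-1}(n-1)=2t$, as desired. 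The hypothesis $r\ge 4$ is used here through $2\Delta>n$ (which fails for $r=3$, where the statement instead rests on the separate Erdős--Goodman--P\'osa argument): since $\deg_H(u)\ge \deg_G(u)+\deg_G(v)-n\ge 2d-n>2(d-\Delta)$ for every $u\in N(v)$, the graph $H$ on its $d$ vertices has minimum-degree ratio exceeding (for large $n$) the Turán threshold $\tfrac{r-3}{r-2}$ for containing a copy of $K_{r-1}$.

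\textbf{Main obstacle.} The crux is verifying, in the dense case, that $H$ actually contains roughly $2(d-\Delta)/(r-2)$ vertex-disjoint copies of $K_{r-1}$. That $H$ contains at least one (and, by supersaturation, many) copies follows from the minimum-degree bound above, but extracting vertex-disjoint copies greedily---pull out a copy of $K_{r-1}$, bound the number of edges destroyed, iterate---only guarantees on the order of $(d-\Delta)/(r-1)$ of them, which falls short of the target by a constant factor exactly when $d$ sits in an intermediate range and $r$ is small (already for $r=4$). I expect closing this gap to be where essentially all of the difficulty lies. The approach would be to exploit the freedom to choose $v$ among all minimum-degree vertices, and, more importantly, to isolate the graphs for which the induction is tight: these must be structurally close to a Turán graph $T_{r-1}(n)$ with a small number of additional edges, and for such graphs one instead exhibits an explicit economical decomposition, routing almost every ``extra'' edge through a copy of $K_r$ formed by that edge together with one vertex from each of the remaining $r-2$ Turán parts. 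The necessary tools are Hajnal--Szemer\'edi-type minimum-degree conditions for $K_{r-1}$-tilings (and their defect/partial versions), together with a stability theorem of Erdős--Simonovits type to pin down the near-extremal structure; by contrast the reduction in Step 1 and the generic inductive step are routine.
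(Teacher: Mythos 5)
This theorem is not proved in the paper at all: it is quoted from Gy{\H o}ri and Tuza \cite{GyoriTuza1987}, and the only related content in the paper is the remark that \eqref{gyorituza} is equivalent to the packing bound $\nu_r(G)\ge 2k/(r(r-2))$ --- which is exactly your Step 1, and that step is correct. So your proposal has to be judged on its own, and it contains a genuine gap that you yourself flag: the entire dense case rests on the claim that if every vertex has degree $d>\Delta=n-\ceil{n/(r-1)}$, then $H=G[N(v)]$ contains $m=\ceil{2(d-\Delta)/(r-2)}$ pairwise vertex-disjoint copies of $K_{r-1}$, and this is never established. The information you actually have, $\delta(H)\ge 2d-n$ on $d$ vertices, sits essentially at the Tur\'an threshold $\frac{r-3}{r-2}d$ when $d$ is close to $\Delta$, so it guarantees one copy but nothing like a tiling of the required size in the intermediate range of $d$; as you note, greedy extraction gives only on the order of $(d-\Delta)/(r-1)$ disjoint copies, short by a constant factor already for $r=4$. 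The cost accounting around this claim (the $2d-(r-2)m+2t_{r-1}(n-1)\le 2t_{r-1}(n)$ computation, and the light-vertex case $d\le\Delta$) is fine, but it is bookkeeping; the missing tiling statement is the theorem's real content.

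The repair you sketch --- Hajnal--Szemer\'edi/Koml\'os-type tiling theorems combined with Erd\H{o}s--Simonovits stability and an explicit decomposition for graphs close to $T_{r-1}(n)$ --- is only named, not carried out, and as described it would at best yield the inequality for sufficiently large $n$ (and typically with error terms), whereas \eqref{gyorituza} is an exact bound for every $n$ with no slack: the inductive step needs the full count $m$ exactly, not asymptotically. A stability route would also still have to handle graphs that are dense but not near-extremal, i.e.\ precisely the intermediate-$d$ regime where your greedy count falls short. So: Step 1 is correct and routine, the inductive skeleton is plausible, but the proof is incomplete at its central step, and there is no argument in the paper to compare it against since the paper cites the result rather than proving it.
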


We let $\nu_r(G)$ denote the maximum number of edge disjoint $r$-cliques in $G$. It is straightforward to verify that \eqref{gyorituza} is equivalent to the following. Let $G$ be an $n$-vertex graph, let $r \ge 4$, and  $k \ge 0$ such that $e(G) = t_{r-1}(G) + k$. Then~\eqref{gyorituza} holds if and only if \[\nu_r(G) \ge \frac{2k}{r(r-2)}.\]
Motivated by this, Gy{\H o}ri \cite{Gyori1991}, see also \cite{Gyori1992,Tuza2001}, conjectured that Theorem~\ref{thm:gyori_tuza} holds asymptotically for $r = 3$, and further, Theorem~\ref{thm:gyori_tuza} can be strengthened for $r \ge 4$. 

\begin{conjecture}[Gy{\H o}ri \cite{Gyori1991,Gyori1992}, Tuza \cite{Tuza2001}] \label{conj:Gyori}
    Let $G$ be an $n$-vertex graph, let $r \ge 3$ be fixed, and let $k \in \mathbb{R}$ such that $e(G)= t_{r-1}(n) +k$. Then 
    \[\nu_r(G) \ge  (2-o(1))k/r.\]
\end{conjecture}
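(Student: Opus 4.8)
The plan is to prove the statement by induction on $k$ (equivalently, on $e(G)$), repeatedly removing a single $r$-clique and controlling how much the excess $k$ can drop. Set $k = e(G) - t_{r-1}(n)$ and assume $k > 0$, so by Tur\'an's theorem $G$ contains an $r$-clique $K$. We would like to delete $E(K)$ and apply induction to $G - E(K)$; the new excess is $k - \binom{r}{2} + \delta$, where $\delta \ge 0$ accounts for the fact that $e(G - E(K))$ may still exceed $t_{r-1}(n)$ by more than $k - \binom{r}{2}$ only in the trivial bookkeeping sense — in fact the excess drops by exactly $\binom{r}{2}$, which is far too much: it only yields $\nu_r(G) \ge 2k/(r(r-1))$, matching Gy\H ori--Tuza's bound rather than the conjectured $2k/r$. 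The real content of the conjecture is that one can do roughly $r-1$ times better, so the edge-disjoint $r$-cliques must be packed so efficiently that \emph{on average each new clique destroys only about $r$ units of excess rather than $\binom{r}{2}$}. Thus the heart of the argument must be a clever choice of which clique (or structured family of cliques) to remove.

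The key idea I would pursue is a \emph{local switching / weight-shifting} argument in the spirit of the Erd\H os--Goodman--P\'osa and Gy\H ori--Tuza proofs, combined with a stability analysis. First, one reduces to the case where $G$ is edge-maximal with respect to not improving: i.e., we may assume every edge lies in many $r$-cliques, since an edge in few cliques can be deleted with a controlled change to both sides of the inequality. Second, using a defect form of Tur\'an's theorem (Erd\H os--Simonovits stability), when $k = o(n^2)$ the graph $G$ is close in edit distance to the Tur\'an graph $T_{r-1}(n)$; write $G = T_{r-1}(n) \,\triangle\, D$ where $D$ is a sparse "defect" graph with $e(D) = O(k)$ edges (the $o(n^2)$ regime is the only one where the bound has content, since for $k = \Omega(n^2)$ a supersaturation/random-greedy packing already gives $\Omega(n^r)$ cliques, far more than needed). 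Third — and this is the crux — one shows that the extra edges inside the parts of $T_{r-1}(n)$ can be "used up" by $r$-cliques at a rate of essentially one extra edge per clique: each $r$-clique of $G$ uses one vertex in each of $r-1$ parts plus one more vertex, so it meets exactly one part twice, consuming exactly one "surplus" edge (an edge inside a part) while also consuming $r-1$ "Tur\'an" edges. A fractional relaxation (LP duality / an auxiliary flow) then lets us extract $(2-o(1))k/r$ edge-disjoint such cliques: the factor $2/r$ appears because each surplus edge can be covered about twice before conflicts force a loss, exactly as in the $\pi_r(G) \le 2 t_{r-1}(G)$ bound.

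In more detail, the steps in order would be: (1) dispose of the dense case $k \ge \varepsilon n^2$ by a supersaturation argument giving $\nu_r(G) = \Omega(n^{r-2}) \gg k$; (2) invoke stability to write $G$ as $T_{r-1}(n)$ plus $O(k)$ surplus edges inside parts minus $O(k)$ missing cross edges, and argue the missing edges only cost a lower-order term; (3) build an auxiliary hypergraph whose vertices are the surplus edges and whose hyperedges are the $r$-cliques of $G$, and show it has a near-perfect fractional matching of the required size by exhibiting an explicit fractional assignment (each $r$-clique through a given surplus edge $uv$ with $u,v$ in part $V_i$ is formed by choosing one vertex from each other part, and the Tur\'an-graph structure guarantees $\approx (n/(r-1))^{r-2}$ choices, all edge-disjoint on the cross edges after a defect-counting correction); (4) round the fractional matching to an integral edge-disjoint clique family via the Pippenger--Spencer / R\"odl nibble, losing only a $(1-o(1))$ factor; (5) track constants to confirm the leading term is $2k/r$, using that the nibble wastes a vanishing proportion and that the doubling (covering each surplus edge twice) is what upgrades $k/r$ to $2k/r$.

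The main obstacle I anticipate is step (3)–(4): ensuring the $r$-cliques can be chosen \emph{globally edge-disjoint} — not merely locally around each surplus edge — while still achieving the factor $2$ rather than $1$. Naively each surplus edge yields many cliques but they share cross edges with cliques built on other surplus edges, and a crude union bound only recovers $\nu_r(G) \ge (1-o(1))k/r$. Overcoming this requires either a careful two-round nibble (first pack a $1$-factor's worth, then re-examine the residual graph, whose surplus is nearly untouched because each clique consumed only one surplus edge) or a direct LP-duality argument showing the fractional cover number of the surplus edges by $r$-cliques is at least $(2-o(1))k/r$ — equivalently, that no fractional vertex cover of the auxiliary hypergraph is cheaper than this, which is where the precise extremal structure of $T_{r-1}(n)$ (and a matching lower-bound construction showing $2$ is optimal) must be exploited. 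Handling the interaction between surplus edges that happen to lie in the same part, and cliques using two surplus edges at once, is the delicate combinatorial bookkeeping that I expect to occupy the bulk of the proof.
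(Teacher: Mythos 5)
Your plan has the regimes inverted, and the case you dismiss in one line is exactly where the content of the conjecture lies. For $k = o(n^2)$, Gy\H ori's stability theorem (Theorem~\ref{thm:stability} in the paper) already gives $\nu_r(G) \ge (1-o(1))k$, which is stronger than $(2-o(1))k/r$; so your steps (2)--(5) (stability, surplus edges inside parts, fractional matching, nibble) are aimed at a regime that is already known, and indeed your own accounting there would naturally produce $(1-o(1))k$, not a new bound. The hard case is $k = \Theta(n^2)$, and your step (1) does not dispose of it: supersaturation gives $\Omega(n^r)$ \emph{copies} of $K_r$, but \emph{edge-disjoint} copies are trivially capped at $\binom{n}{2}/\binom{r}{2} = O(n^2)$, never $\Omega(n^{r-2})\gg k$ in the sense you need. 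Worse, at $G = K_n$ one has $k \sim n^2/(2(r-1))$ and $(2-o(1))k/r \sim \binom{n}{2}/\binom{r}{2}$, so by Wilson's theorem the conjectured bound is asymptotically \emph{tight} in the dense regime --- there is no slack for a crude packing argument, and stability/surplus-edge structure is unavailable since $G$ need not resemble $T_{r-1}(n)$ at all. Relatedly, the heuristic that the factor $2$ comes from ``covering each surplus edge about twice'' cannot be right for edge-disjoint packings, where each edge is used at most once; the constant $2/r$ is forced by the dense end of the range, not by any doubling at surplus edges.

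For comparison, the paper's route is: prove the exact fractional inequality $\nu_r^*(G) \ge 2k/r$ (with $k$ measured against $(1-\tfrac{1}{r-1})n^2/2$) for \emph{all} densities, by a fractional Zykov-type symmetrization reducing to complete multipartite graphs and then an induction on $r$ plus the number of parts with explicit optimization; then transfer to integral packings via Haxell--R\"odl, and handle the residual $o(n^2)$ window with Gy\H ori's theorem. Your step (4) (nibble rounding) is essentially the Haxell--R\"odl transfer and is fine as stated; what is missing from your proposal is the entire dense-case fractional (or integral) lower bound, which is the heart of the matter.
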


Kr{\'a}l', Lidick{\'y}, Martins, and Pehova \cite{KLMP2019} proved Conjecture~\ref{conj:Gyori} in the special case of $r = 3$. 

\begin{theorem}[Kr{\'a}l', Lidick{\'y}, Martins, Pehova \cite{KLMP2019}]\label{thm:Gyori_3}
    Let $G$ be an $n$-vertex graph, then
    \[ \pi_3(G) \le (1/2 + o(1))n^2.\]
\end{theorem}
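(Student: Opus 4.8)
The plan is to use the identity $\pi_3(G) = 2e(G) - 3\nu_3(G)$ — take a maximum family of edge-disjoint triangles and leave every other edge as its own block — which makes the assertion equivalent to the packing bound $\nu_3(G) \ge \tfrac{2}{3}\big(e(G) - t_2(n)\big) - o(n^2)$. First I would pass to the fractional relaxation: write $\nu_3^\ast(G)$ for the value of the linear program that assigns nonnegative weights $x_T$ to the triangles of $G$ subject to $\sum_{T \ni e} x_T \le 1$ for every edge $e$. By the theorem of Haxell and R{\"o}dl comparing integer and fractional packings in dense graphs, $\nu_3(G) \ge \nu_3^\ast(G) - o(n^2)$, so it is enough to prove the clean inequality $\nu_3^\ast(G) \ge \tfrac{2}{3}\big(e(G) - t_2(n)\big)$.

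By LP duality $\nu_3^\ast(G) = \tau_3^\ast(G)$, the minimum total weight of a fractional triangle cover $y\colon E(G) \to \mathbb{R}_{\ge 0}$, i.e.\ one with $y_e + y_f + y_g \ge 1$ for every triangle $\{e,f,g\}$. Substituting $w_e := \tfrac{2}{3} - y_e$ (so $\sum_e y_e = \tfrac{2}{3}e(G) - \sum_e w_e$ and $y_e \ge 0 \Leftrightarrow w_e \le \tfrac{2}{3}$) turns $\tau_3^\ast(G) \ge \tfrac{2}{3}e(G) - \tfrac{2}{3}t_2(n)$ into the following weighted Tur\'an-type inequality, which I would isolate as the main lemma: \emph{if $w\colon E(G) \to (-\infty,\tfrac{2}{3}]$ satisfies $w_e + w_f + w_g \le 1$ for every triangle, then $\sum_e w_e \le \tfrac{2}{3}t_2(n)$.} The two extremal configurations — the Tur\'an graph $T_2(n)$ with all weights $\tfrac{2}{3}$, and $K_n$ with all weights $\tfrac{1}{3}$ — both attain this, which is a good sign that it is the right statement.

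To prove the lemma I would induct on $n$ via a minimal counterexample $(G,w)$. Deleting any vertex $v$ leaves a valid weighting on $n-1$ vertices, so by minimality $\sum_e w_e - W(v) \le \tfrac{2}{3}t_2(n-1)$ where $W(v) := \sum_{u \sim v} w_{uv}$; since $t_2(n) - t_2(n-1) = \lceil (n-1)/2 \rceil$, this forces $W(v) > \tfrac{2}{3}\lceil (n-1)/2 \rceil$ for \emph{every} vertex, and hence $\deg(v) \ge \tfrac{3}{2}W(v) > \lceil (n-1)/2 \rceil$, i.e.\ $\delta(G) \ge (1-o(1))\tfrac{n}{2}$. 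I would then try to push the minimum degree up further by feeding this back into the link of a vertex: summing the triangle constraints over all triangles through a fixed $v$ gives a weighted inequality relating $W(v)$, the weights inside $N(v)$, and the codegrees at $v$. Once $\delta(G)$ passes the threshold that guarantees a fractional $K_3$-decomposition (Dross's theorem gives this at $\delta \ge 0.9n$, and smaller constants are known), $G$ carries weights $x_T \ge 0$ with $\sum_{T \ni e} x_T = 1$ for all $e$, and then $\sum_e w_e = \sum_T x_T \sum_{e \in T} w_e \le \sum_T x_T = e(G)/3 < \tfrac{2}{3}t_2(n)$, a contradiction.

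The hard part will be the middle minimum-degree range, between the $\approx n/2$ the induction yields for free and the $\approx 0.9n$ that the fractional-decomposition input requires: a minimal counterexample there has to be handled directly. I expect this needs a stability argument pinning $(G,w)$ down near one of the two extremal configurations, combined with a charging argument whose local inequalities live on bounded-size pieces and can be certified by a finite (flag-algebra-type) computation; the delicate point is controlling the heavily negative edges (those with $w_e$ far below $0$) against the positive ones, since the averaging that is tight on the Tur\'an side and on the $K_n$ side is pulled in opposite directions by them. A last, routine, point is transferring the exact fractional bound back to $\nu_3$, where the Haxell--R{\"o}dl error contributes the $o(n^2)$; recovering Conjecture~\ref{conj:Gyori} for $r=3$ in the sharper $(2-o(1))k/3$ form additionally requires handling $k = o(n^2)$, which is a separate matter.
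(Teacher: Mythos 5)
Your opening reductions are fine and coincide with the paper's: $\pi_3(G)=2e(G)-3\nu_3(G)$, Theorem~\ref{thm:haxell_rodl} to pass to the fractional packing number, and your LP-dual substitution $w_e=\tfrac23-y_e$ is a correct reformulation of the clean inequality $\nu_3^*(G)\ge\tfrac23\bigl(e(G)-t_2(n)\bigr)$, which (up to the harmless gap between $t_2(n)$ and $n^2/4$) is exactly Theorem~\ref{thm:main} with $r=3$. You are also right that for the $o(n^2)$-form of the statement no separate small-$k$ argument is needed. The problem is that you never prove this core inequality. Your minimal-counterexample step only yields $W(v)>\tfrac23\lceil (n-1)/2\rceil$ and hence $\delta(G)>\lceil (n-1)/2\rceil$, and this is essentially the best one can hope for: the extremal configuration $T_2(n)$ with all weights $\tfrac23$ has minimum degree about $n/2$, so near-extremal weightings live precisely in your ``middle minimum-degree range,'' and no bootstrapping of the link can push $\delta(G)$ anywhere near the $0.9n$ threshold needed to invoke a fractional triangle decomposition. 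Your plan for that range is an unspecified stability-plus-charging argument that you yourself say may require a finite flag-algebra-type computation; that is the heart of the theorem left open (and reintroducing flag algebras would in any case forfeit the point of the exercise, since the original proof of Kr\'al', Lidick\'y, Martins and Pehova already went that way).

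The paper's route around this is structurally different at exactly the step you are missing: instead of LP duality and a minimum-degree dichotomy, it proves the fractional bound for \emph{all} graphs simultaneously by a fractional Zykov symmetrization. Lemma~\ref{lem:symmetrization} shows the functional $\nu_r^*(G)-\tfrac2r e(G)$ does not increase under merging two clone classes with no edges between them, Lemma~\ref{lem:multipartite} iterates this to reduce to complete multipartite graphs, and Section~\ref{sec:proof} finishes by an induction on $r+s$ with explicit elementary optimization (Claims~\ref{clm:trivial_case}--\ref{clm:e(G)_lb}); in particular the hard ``middle'' regime never arises because the argument is not degree-based at all. If you want to salvage your dual formulation, you still need a global mechanism valid at minimum degree $\approx n/2$ — symmetrization (applied either to $G$ or, dually, to the weighting $w$) is the ingredient your proposal lacks.
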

Computer assisted flag algebra calculations were essential in the proof of Theorem~\ref{thm:Gyori_3}. Building on Theorem~\ref{thm:Gyori_3} and using stability arguments, Blumenthal,  Lidick\'y,  Pehova,  Pfender,  Pikhurko, and  Volec~\cite{BLPPPV2021} showed $\pi_3(G) \le n^2/2 + 1$ for sufficiently large $n$ and characterized the extremal graphs.
Their work still depended essentially on flag-algebra calculations. Also, using stability arguments, Gy{\H o}ri  \cite{Gyori1991} showed that Conjecture~\ref{conj:Gyori} is true  for small  $k$.

\begin{theorem}[Gy{\H o}ri  \cite{Gyori1991}]\label{thm:stability}
      Let $G$ be an $n$-vertex graph, let $r \ge 3$ be fixed, and let $k(n) = k \in \mathbb{R}$ such that $e(G)=t_{r-1}(n)+k$ and $k = o(n^2)$. Then 
      \[\nu_r(G) \ge k - O\left(\frac{k^2}{n^2}\right) =  (1-o(1))k.\]
\end{theorem}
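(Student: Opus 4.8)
\emph{Overview and Step 1 (reduction to a near-Tur\'an graph).} The plan is to first reduce to the case that $G$ is within $O(k)$ edges of the Tur\'an graph $T_{r-1}(n)$, and then pack $r$-cliques greedily, using exactly one ``internal'' edge per clique. For the reduction we may assume $k\ge 0$ and, since otherwise we are done, $\nu_r(G)<k$. The $\binom r2\nu_r(G)$ edges of a maximum edge-disjoint family of $r$-cliques meet every $r$-clique of $G$, so after deleting fewer than $\binom r2 k$ edges we obtain a spanning $K_r$-free subgraph $G'$ with $e(G')>t_{r-1}(n)-\binom r2 k$. Since $k=o(n^2)$, a stability theorem for $K_r$-free graphs of near-extremal size — one wants the sharp form with edit distance linear in $t_{r-1}(n)-e(G')$, though the Erd\H os--Simonovits form with an $o(n^2)$ error would also suffice below — yields a partition $V(G)=V_1\cup\dots\cup V_{r-1}$ with respect to which $G'$, hence $G$, has at most $C_1 k$ \emph{internal} edges (edges inside a single part), where $C_1=C_1(r)$. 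Fix this partition; let $b$ be its number of internal edges and $m$ its number of \emph{missing cross pairs} (non-edges of $G$ joining distinct parts). Comparing $e(G)$ with the complete $(r-1)$-partite graph on the partition and applying Tur\'an's theorem gives $k\le b\le C_1 k$ and $m\le b-k\le C_1 k$, and a convexity estimate then forces $\sum_i(|V_i|-\tfrac n{r-1})^2=O(k)$, so each part has size $(1+o(1))\tfrac n{r-1}$.

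\emph{Step 2 (the packing).} Because there are only $r-1$ parts, every $r$-clique of $G$ contains at least one internal edge, and edge-disjoint $r$-cliques use distinct internal edges; hence $\nu_r(G)\le b$, and it suffices to match this up to an additive $O(k^2/n^2)$. I would build the family greedily over the internal edges: for an internal edge $e=uv$ with $u,v\in V_i$, a \emph{completion} is a choice of $w_j\in N(u)\cap N(v)\cap V_j$ for each $j\ne i$ with the $w_j$ pairwise adjacent, so that $\{u,v\}\cup\{w_j:j\ne i\}$ spans an $r$-clique whose only internal edge is $e$; at each step we add a clique all of whose cross edges are still unused. Local optimality of the partition gives that every $x\in V_i$ has at least as many neighbours in each $V_j$ as in $V_i$, so $u$ and $v$ each have many neighbours in every other part; with the balanced part sizes, an internal edge $uv$ both of whose endpoints have at most $\tfrac n{100(r-1)}$ non-neighbours in every other part admits $(1-o(1))(\tfrac n{r-1})^{r-2}$ completions. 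A fixed such ``good'' edge can have one of its completions blocked only by a used edge at $u$ or $v$, up to a lower-order contribution from used edges lying entirely among the $V_j$'s (controlled since only $\binom r2 b=O(k)=o(n^2)$ cross edges are ever used); hence a good edge fails to be converted only if the number of used cross edges at $\{u,v\}$ exceeds $cn$ for a suitable $c=c(r)>0$. Since the total number of used cross edges is $O(k)$, only $O(k/n)$ vertices are ``overloaded'' in this sense, and the internal edges that are \emph{not} converted are either incident to such a vertex or ``bad'' — have an endpoint with $\Omega(n)$ cross non-neighbours, of which there are again only $O(k/n)$ by $m=O(k)$. Confining the lost edges to $O(k^2/n^2)$ then gives $\nu_r(G)\ge b-O(k^2/n^2)\ge k-O(k^2/n^2)$; for $k=o(n)$ nothing is lost and one simply gets $\nu_r(G)\ge b\ge k$.

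\emph{Main obstacle.} The hard part is exactly this last step: showing the lost internal edges number only $O(k^2/n^2)$, rather than the $O(k)$ produced by the naive count (number of overloaded-or-deficient vertices, at most $O(k/n)$, times the maximum internal degree, which can be as large as $\Omega(n)$). To beat this one must argue that a lost internal edge cannot have one ``ordinary'' endpoint: using that each vertex has at least as large a cross-degree into every $V_j$ as its internal degree, if $uv$ is lost and $v$ has small internal degree then $v$ has large cross-degree, so a large internal degree of $u$ cannot compensate and instead forces $u$ itself to be deficient or overloaded — confining the lost edges to pairs \emph{both} of whose endpoints lie in the $O(k/n)$-element bad/overloaded set, about $\binom{O(k/n)}2=O(k^2/n^2)$ of them. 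This in turn requires running the greedy so that the used cross edges do not concentrate at a single vertex (so that the ``overloaded'' set stays genuinely small), and, for $r\ge4$, separately discarding the internal edges $uv$ whose relevant parts $V_j$ lack enough \emph{mutually adjacent} common neighbours of $u,v$; the number of such edges is again governed by $m=O(k)$ in the same way. Everything else is routine: Step 1 is a black-box use of stability together with the convexity estimate, and the greedy of Step 2 needs only that the pool of completions of a good edge dwarfs the number of previously used cross edges.
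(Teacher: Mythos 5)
First, a point of reference: the paper does not prove Theorem~\ref{thm:stability}; it is quoted as a black box from Gy{\H o}ri's 1991 paper, so there is no in-paper proof to compare against and your attempt has to be judged on its own. Your Step~1 is fine in outline: deleting the edges of a maximum packing gives a $K_r$-free graph missing only $O(k)$ edges from $t_{r-1}(n)$, and a linear-edit-distance stability theorem (e.g.\ F\"uredi's: a $K_r$-free graph with $t_{r-1}(n)-t$ edges can be made $(r-1)$-partite by deleting at most $t$ edges) plus the convexity estimate gives a near-balanced $(r-1)$-partition with $k\le b\le C_1k$ internal edges and $m\le b-k$ missing cross pairs. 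The greedy of Step~2 also correctly converts every internal edge whose endpoints are neither ``deficient'' nor ``overloaded'', since a good edge has $\Omega(n^{r-2})$ completions and at most $cn\cdot n^{r-3}+o(n^{r-2})$ of them can be blocked.

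The genuine gap is exactly the step you flag, and your sketched repair does not close it. It is not true that a lost internal edge must have \emph{both} endpoints in the $O(k/n)$-element bad/overloaded set. Take a vertex $u$ with internal degree $d_u=\Theta(n)$ (there can be $\Theta(k/n)$ such vertices, and they carry almost all of $b$). Every clique converted from an internal edge at $u$ consumes one previously unused cross edge of $u$ into \emph{each} other part, and further cliques may consume cross edges at $u$ by using $u$ as a completion vertex for internal edges elsewhere; since by local optimality $u$ has only $d_{V_j}(u)\ge d_u$ cross neighbours in $V_j$, with essentially no slack, the later internal edges at $u$ can have all completions blocked even though their second endpoints are perfectly ordinary. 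Those lost edges number up to $\Theta(n)$ per such $u$, i.e.\ $O(k/n)\cdot\Theta(n)=O(k)$ in total --- which is the naive bound you set out to beat, not $O(k^2/n^2)$. (Also, the inference ``$v$ has small internal degree, hence large cross-degree'' is backwards: local optimality only gives cross-degree at least internal degree, so a small internal degree yields no useful lower bound.) To finish one needs a genuinely different argument at this point: for each high-internal-degree vertex a Hall-type/defect form of matching its internal edges to systems of distinct cross neighbours (run simultaneously for all such vertices so that completion vertices are not over-used), together with, for $r\ge4$, control of the mutual adjacencies among the chosen completion vertices via $m=O(k)$. That is the substance of Gy{\H o}ri's proof and it is not supplied here; as written, your argument proves $\nu_r(G)\ge k-O(k)$, i.e.\ nothing, once $k\gg n$.
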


Before stating our result, we need to introduce some notation. For a graph $G$, write $\mathcal{K}(G)$ for the set of cliques of $G$, and  $\mathcal{K}_r(G)$ for the set of $r$-cliques, where $r\ge 2$. A fractional $r$-clique packing is a mapping $f : \mathcal{K}_r(G) \to \mathbb{R}_{\ge 0}$ such that 
\[ \sum_{\substack{K \in \mathcal{K}(G)\\ e \in E(K)}} f(K) \le 1 \quad \text{ for every } e \in E(G).\]
For   a fractional $r$-clique packing  $f$ we define
\[ \norm{f} = \sum_{K \in \mathcal{K}(G)} f(K).\]
We let $\nu_r^*(G) = \max \norm{f}$, where the maximum is taken over all fractional $r$-clique packings. Standard linear programming arguments show that $\nu_r^*(G)$ is well-defined. Clearly, $\nu_r^*(G) \ge \nu_r(G)$. A well-known result of Haxell and R{\"o}dl \cite{HaxellRodl2001} states that the $r$-clique packing number and its fractional analog are close in dense graphs, see also \cite{Yuster2005}.

\begin{theorem}[Haxell, R{\"o}dl \cite{HaxellRodl2001}]\label{thm:haxell_rodl}
    For all $\varepsilon > 0$ and  $n$ sufficiently large,   for all $n$-vertex graphs $G$,
    \[ \nu^*_r(G) - \nu_r(G) \le \varepsilon n^2.\]
\end{theorem}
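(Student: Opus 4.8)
We sketch a proof of Theorem~\ref{thm:haxell_rodl}. The plan is to apply Szemer\'edi's regularity lemma to pass from $G$ to a cleaned-up subgraph whose copies of $K_r$ are spread out in a controlled way, losing only $o(n^2)$ in the fractional packing number, and then to round an essentially optimal fractional packing of the cleaned-up graph to an integral one of almost the same size by the semi-random (R\"odl nibble) method. For the first step, given $\varepsilon>0$ fix auxiliary constants $\varepsilon'\ll d\ll \varepsilon$, apply the regularity lemma to get an $\varepsilon'$-regular partition $V(G)=V_0\cup V_1\cup\dots\cup V_m$ with $m$ bounded and large (say $m\ge 1/\varepsilon'$), $|V_0|\le\varepsilon' n$, and $|V_1|=\dots=|V_m|=:L$, and let $G'\subseteq G$ be obtained by deleting every edge that is incident to $V_0$, lies inside some $V_i$, crosses a pair $(V_i,V_j)$ that is not $\varepsilon'$-regular, or crosses a pair of density $<d$. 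The number of deleted edges is $O((\varepsilon'+1/m+d)n^2)$, which we may assume is at most $\tfrac{\varepsilon}{3}n^2$. Since deleting a set $S$ of edges lowers $\nu_r^*$ by at most $|S|$ (restrict an optimal packing $f$ to cliques avoiding $S$; the weight lost is $\le\sum_{e\in S}\sum_{K\ni e}f(K)\le|S|$), we get $\nu_r^*(G')\ge\nu_r^*(G)-\tfrac{\varepsilon}{3}n^2$, while $\nu_r(G')\le\nu_r(G)$, so it suffices to prove $\nu_r(G')\ge\nu_r^*(G')-\tfrac{2\varepsilon}{3}n^2$. Let $R$ be the cluster graph on $[m]$ (with $ij\in E(R)$ iff the pair survived, of density $d_{ij}\ge d$); since $G'$ has no edge inside a cluster and none across a non-edge of $R$, every copy of $K_r$ in $G'$ has its $r$ vertices in distinct clusters spanning a clique of $R$.

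Next I would smooth the fractional packing. Let $f$ be an optimal fractional $K_r$-packing of $G'$. For $T\in\mathcal{K}_r(R)$ let $G'_T$ be the $r$-partite subgraph of $G'$ on the clusters of $T$, let $N_T$ be the number of copies of $K_r$ in $G'_T$, and let $w_T$ be the total $f$-weight on copies of $K_r$ spanning exactly the clusters of $T$, so $\norm{f}=\sum_T w_T$. For any two clusters $i,j$, each copy counted in some $w_T$ with $T\supseteq\{i,j\}$ uses exactly one edge of $G'$ between $V_i$ and $V_j$, whence $\sum_{T\supseteq\{i,j\}}w_T\le\sum_{e\text{ between }V_i,V_j}\sum_{K\ni e}f(K)\le d_{ij}L^2$. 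By the counting lemma $N_T=(1\pm o(1))\big(\prod_{ab\in\binom{T}{2}}d_{ab}\big)L^r$, and any edge of $G'_T$ lies in $(1\pm o(1))N_T/(d_{ij}L^2)$ copies of $K_r$ in $G'_T$, where $V_i,V_j$ are the clusters it meets. Now set $g(K):=(1-\eta)\,w_T/N_T$ for every copy $K$ spanning $T$, where $\eta=o(1)$ is a small constant dominating the counting-lemma error. For an edge $e$ between $V_i,V_j$, the total $g$-weight through $e$ is $(1\pm o(1))(1-\eta)(d_{ij}L^2)^{-1}\sum_{T\supseteq\{i,j\}}w_T\le 1$, so $g$ is a valid fractional $K_r$-packing with $\norm{g}=(1-\eta)\sum_T w_T=(1-o(1))\nu_r^*(G')$, and crucially $g$ is \emph{uniform on each $G'_T$}.

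Then I would round $g$ to an integral packing. Regard the copies of $K_r$ in $G'$ as the hyperedges of the $\binom{r}{2}$-uniform hypergraph $\mathcal{H}$ on vertex set $E(G')$; integral $K_r$-packings of $G'$ are matchings in $\mathcal{H}$ and $g$ is a fractional matching. Because $g$ is cluster-uniform, all quantities needed for the semi-random method are $o(1)$: $\max_K g(K)=O(L^{2-r})$; the $g$-weight through any single vertex of $\mathcal{H}$ is at most $1$; and for any two vertices $e,e'$ of $\mathcal{H}$ the $g$-weight through both is $O(1/L)$ (forcing a common clique either is impossible or removes at least one further free cluster, dropping the count by a factor $\Theta(1/L)$ relative to a single-edge degree, which is absorbed via $\sum_{T\supseteq\{i,j\}}w_T\le d_{ij}L^2$). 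Hence the R\"odl nibble—equivalently the Pippenger--Spencer theorem after a standard ``clone and rebalance'' step on the support of $g$—yields a matching $M$ in $\mathcal{H}$, i.e.\ a family of edge disjoint copies of $K_r$ in $G'$, with $|M|\ge(1-o(1))\norm{g}=(1-o(1))\nu_r^*(G')$. Taking $\eta\le\varepsilon/3$ and then $n$ (hence $L\ge(1-\varepsilon')n/m$) large enough drives all losses below $\tfrac{2\varepsilon}{3}n^2$, giving $\nu_r(G')\ge|M|\ge\nu_r^*(G')-\tfrac{2\varepsilon}{3}n^2$, which with the first step completes the proof.

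I expect the rounding step to be the main obstacle: an arbitrary optimal $f$ may concentrate all of its weight on very few cliques, so there is no local pseudorandomness to drive the nibble, and it is the smoothing step that repairs this—the reason it costs only a $(1-o(1))$ factor is precisely the bound ``fractional load through a cluster-pair $\le$ number of edges there'' combined with the counting lemma. A secondary point needing care is the order of quantifiers: the cluster size $L$ is only guaranteed once $\varepsilon'$, $d$, and $m$ (from the regularity lemma) are fixed, so the nibble error $o_L(1)$ must be sent to $0$ by choosing $n$ large only afterwards, and one must also check that the ``clone and rebalance'' reduction to Pippenger--Spencer (or the direct weighted nibble analysis) tolerates the mild non-regularity of the vertex degrees of $\mathcal{H}$.
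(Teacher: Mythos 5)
The paper does not prove this statement at all: Theorem~\ref{thm:haxell_rodl} is imported as a black box from Haxell and R\"odl \cite{HaxellRodl2001} (with \cite{Yuster2005} cited as a related reference), so there is no internal proof to compare against. Your sketch follows the standard route to this result --- regularity lemma plus cleaning, redistribution of the fractional packing over the regular $r$-tuples, then the semi-random nibble (Pippenger--Spencer/Frankl--R\"odl) to round to an integral packing --- which is essentially the strategy of the original paper and of Yuster's treatment, so as an outline it is the right proof and correctly identifies the two places where the work lies (smoothing and rounding).

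One step is overclaimed as written: in the smoothing argument you assert that \emph{any} edge of $G'_T$ lies in $(1\pm o(1))N_T/(d_{ij}L^2)$ copies of $K_r$ in $G'_T$. The counting lemma controls the total number of copies, not the number through every individual edge; an atypical edge (e.g.\ one whose endpoints have abnormally large common neighbourhoods in the other clusters) can lie in far more than the average number of copies, and for such an edge the uniform weights $g(K)=(1-\eta)w_T/N_T$ can push the load through that edge above $1$, so $g$ as defined need not be a fractional packing. The standard repair is to clean further: by regularity and the slicing lemma, for each surviving pair all but $o(L^2)$ edges are typical (their joint neighbourhoods in every other cluster have the expected size and inherit regularity, giving two-sided control on the number of extensions), so deleting the atypical edges costs only $o(n^2)$ in $\nu_r^*$ and restores the claim for the remaining edges. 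With that fix, and the care you already flag about quantifier order and about verifying the degree/codegree hypotheses before invoking the weighted nibble, the sketch is a faithful account of the known proof rather than a new one.
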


Our main result is a continuous analog of Conjecture~\ref{conj:Gyori}. 

\begin{theorem}\label{thm:main}
    Let $G$ be an $n$-vertex graph, let $r \ge 3$ and  $k \in \mathbb{R}$ such that $e(G) = \left(1 - \frac{1}{r-1}\right)\frac{n^2}{2} + k$. Then 
    \[\nu_r^*(G) \ge 2k/r.\] 
\end{theorem}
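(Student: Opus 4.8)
The plan is to pass to the linear--programming dual of $\nu_r^*$ and reduce the theorem to a weighted, ``local--to--global'' strengthening of Tur\'an's theorem. By LP duality, $\nu_r^*(G)$ equals the minimum of $\sum_{e}y_e$ over all fractional $r$-clique \emph{covers} $y\colon E(G)\to\mathbb R_{\ge 0}$, i.e.\ weightings with $\sum_{e\in E(K)}y_e\ge 1$ for every $K\in\mathcal K_r(G)$. So it suffices to prove that every such $y$ satisfies $e(G)\le\left(1-\frac{1}{r-1}\right)\frac{n^2}{2}+\frac r2\sum_e y_e$. Writing $w_e:=1-\frac r2 y_e$ (so that $w_e\le 1$, and the substitution is a bijection onto $\{y\ge 0\}$), this becomes the following statement $(\star)$: if $w\colon E(G)\to\mathbb R$ has $w_e\le 1$ for all $e$ and $\sum_{e\in E(K)}w_e\le\left(1-\frac{1}{r-1}\right)\binom r2$ for every $r$-clique $K$, then $\sum_e w_e\le\left(1-\frac{1}{r-1}\right)\frac{n^2}{2}$. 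I would prove $(\star)$; note it is actually equivalent to the theorem, so it carries the full combinatorial content.

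The first step toward $(\star)$ is to reduce to the case where $G$ is complete multipartite, via Zykov-type symmetrization. Among all pairs $(G,w)$ on $n$ vertices satisfying the hypotheses of $(\star)$, take one with $\sum_e w_e$ maximum. If $u,v$ are non-adjacent and $\sum_{e\ni u}w_e\le\sum_{e\ni v}w_e$, then \emph{cloning} $v$ onto $u$---deleting all edges at $u$ and joining $u$ to $N(v)$ by edges of weight $w_{uz}:=w_{vz}$---does not decrease $\sum_e w_e$ and preserves the hypotheses of $(\star)$, since any $r$-clique $K'$ through $u$ in the new graph corresponds to an $r$-clique $K=(K'\setminus\{u\})\cup\{v\}$ in the old graph with $\sum_{e\in E(K')}w_e=\sum_{e\in E(K)}w_e$, and every new edge still has weight $\le 1$. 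The standard consequences of iterating such moves force the extremal $G$ to be complete multipartite, say $G=K_{m_1,\dots,m_t}$; averaging $w$ over the automorphisms fixing each part further lets us assume $w_e=\omega_{ij}$ depends only on the two parts meeting $e$.

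It remains to verify $(\star)$ for $G=K_{m_1,\dots,m_t}$ with $w_e=\omega_{ij}$. If $t\le r-1$ there are no $r$-cliques, so $\sum_e w_e\le e(G)=\sum_{i<j}m_im_j\le t_{r-1}(n)\le\left(1-\frac{1}{r-1}\right)\frac{n^2}{2}$. If $t\ge r$, the constraints read $\sum_{\{i,j\}\subseteq R}\omega_{ij}\le\left(1-\frac{1}{r-1}\right)\binom r2$ over all $r$-subsets $R$ of the parts, together with $\omega_{ij}\le 1$, and one must bound $\sum_{i<j}m_im_j\omega_{ij}$. When the parts are nearly balanced this follows by averaging the clique constraints: one seeks nonnegative multipliers $\lambda_R$ with $\sum_{R\ni i,j}\lambda_R=m_im_j$ for every pair $\{i,j\}$, and their existence gives exactly the bound, using $\binom r2\sum_R\lambda_R=\sum_{i<j}m_im_j\le\frac{n^2}{2}$. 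For very skewed part sizes such multipliers need not exist; there I would instead induct on $t$, deleting the largest part $V_1$ and exploiting the genuine slack in $(\star)$ for fewer parts (the bound $\left(1-\frac{1}{r-1}\right)\frac{n^2}{2}$ is attained only in the $K_r$-free case $t=r-1$) to absorb the weight of the edges at $V_1$, which is itself constrained by the $r$-cliques through $V_1$. This skewed multipartite optimization is where I expect the main difficulty to lie: the preceding reductions are routine, but because $(\star)$ is equivalent to the theorem, this last step is the real work.
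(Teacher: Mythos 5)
Your dual reformulation is correct: by LP duality $\nu_r^*(G)$ is the minimum fractional cover, and the substitution $w_e=1-\tfrac r2 y_e$ turns the theorem into your statement $(\star)$, whose symmetrization step is the mirror image of what the paper does on the primal side (there a convexity lemma for $h_{r,c}(G)=\nu_r^*(G)-\tfrac 2r e(G)$ under merging clone classes reduces to complete multipartite graphs). Up to that point the two routes are essentially equivalent, and your cloning argument, with the usual Zykov-type care about iteration, is fine. The problem is that everything after the reduction --- the verification of $(\star)$ for complete multipartite graphs with parts of arbitrary sizes --- is exactly the content of the theorem, as you yourself note, and your proposal does not actually prove it. The multiplier argument requires nonnegative $\lambda_R$ with $\sum_{R\supseteq\{i,j\}}\lambda_R=m_im_j$ for all pairs; you neither establish for which part-size vectors such a fractional design exists nor delimit a ``nearly balanced'' regime, and for the remaining (skewed) regime you offer only the slogan of inducting on $t$ and ``absorbing'' the weight at the largest part.

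That absorption step is precisely where the paper's real work lies, and your sketch is missing its key ingredient: the paper's induction descends in $r$ as well as in the number of parts. Concretely, with $V_1$ the largest part and $H=G-V_1$, the paper extends an optimal fractional $(r-1)$-clique packing of $H$ by vertices of $V_1$ (with a carefully chosen scaling $\alpha=\min\{1/(nx_1),(r-2)/(n(1-x_1))\}$), combines it with an $r$-clique packing of $H$ obtained from the induction at the same $r$ with fewer parts, and even then the two packings together only give the bound when $k_G\ge \frac{n^2}{2(r-1)}-x_1\frac{n^2}{2}$; the complementary range is killed by a separate counting lemma, $e(G)\ge(1-x_1)n^2/2$ for complete multipartite graphs with largest part ratio $x_1$, which yields a contradiction. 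In dual language, bounding $\sum_j m_1m_j\omega_{1j}$ via the $r$-cliques through $V_1$ inevitably couples to an $(r-1)$-level statement about $H$, not just to $(\star)$ for $t-1$ parts at the same $r$; the ``genuine slack'' for $t-1$ parts is not by itself sufficient when $V_1$ is large, which is exactly the delicate case. So as written the proposal stops short of a proof: the reductions you call routine are indeed close to the paper's Section 2, but the multipartite optimization, which you defer, is unproved and your outline for it omits the mechanism (simultaneous induction on $r$ and on the number of parts, plus the edge-count lower bound) that makes it work in the paper.
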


Let us derive Conjecture~\ref{conj:Gyori} from Theorem~\ref{thm:main}. As a byproduct   of our proof of  Conjecture~\ref{conj:Gyori}, we also obtain flag-algebra free proof of Theorem~\ref{thm:Gyori_3}.

\begin{proof}[Proof of Conjecture~\ref{conj:Gyori} assuming Theorem~\ref{thm:main}]
    By Theorem~\ref{thm:haxell_rodl}, there exists a function $f : \mathbb{N} \to \mathbb{R}$ such that 
    \[ \nu^*_r(G) - \nu_r(G) \le f(n),\]
    where $G$ is an $n$-vertex graph and $f(n) = o(n^2)$. Let $g(n) : \mathbb{N} \to \mathbb{R}$ such that
    \[  \left(1 - \frac{1}{r-1}\right)n^2/2 - t_{r-1}(n) = g(n) = o(n^2).\]
     Finally, let $h : \mathbb{N} \to \mathbb{R}$ be a function such that $f(n) = o(h(n))$, $g(n) = o(h(n))$, and $h(n) = o(n^2)$. Let $G$ be an $n$-vertex graph such that
    \[e(G) = t_{r-1}(n) + k.\]  
    If $k \le h(n)$, then by Theorem~\ref{thm:stability}, we have that $\nu_r(G) \ge (2 - o(1))k/r$. Otherwise, let $k' \in \mathbb{R}$ such that $e(G) = \left(1 - \frac{1}{r-1}\right)n^2/2 + k'$. Note $k = k' + g(n)$. By Theorem~\ref{thm:main}, 
    \[ \nu_r(G) \ge \nu^*_r(G) - f(n) \ge  2k'/r - f(n) = 2(k - g(n))/r - f(n) =  (2 - o(1))k/r.\]
\end{proof}

 The proof of Theorem~\ref{thm:main} follows the general framework set up in \cite{BHKNW2024}. However, there are several differences, we have not found a simple way of `uniting' the two proofs. In Section~\ref{sec:symmetrization}, we prove a specialized fractional analog of `Zykov Symmetrization', effectively reducing the problem to complete multipartite graphs. In Section~\ref{sec:proof}, we prove Theorem~\ref{thm:main} by optimizing over complete multipartite graphs. In Section~\ref{sec:conclusion}, we discuss possible improvements to Theorem~\ref{thm:main}.

\section{Symmetrization}\label{sec:symmetrization}

Let $G$ be a graph. Two vertices $u,v \in V(G)$ are \emph{clones} if $uv \not \in E(G)$ and $N(u) = N(v)$. Let $V_0$ and $V_1$ be two sets of clones of $G$ such that there is no edge between $V_0$ and $V_1$. We shall use the graph operation of `replacing' the neighborhoods of vertices of $V_0$ with the neighborhood of a vertex from $V_1$. With this in mind, let $G[V_0 \to V_1]$ denote the graph where $V(G[V_0 \to V_1]) = V(G)$ and 
\[ E(G[V_0 \to V_1]) = \{ xy \in E(G) : x,y \not\in V_0 \cup V_1 \} \cup \{ xy : x \in V_0 \cup V_1, y \not\in V_0 \cup V_1, zy \in E(G) \},\]
where $z$ is some arbitrary vertex belonging to $V_1$. For a  fixed $c \in \mathbb{R}$ and an integer $r \ge 3$, 
\begin{equation}\label{eq:g_def}
    h_{r,c}(G) := \nu^*_r(G) + c \cdot e(G).
\end{equation}

\begin{lemma}\label{lem:symmetrization}
    Let $G$ be a graph, let $c \in \mathbb{R}$ and $r \ge 3$ be fixed, and let $h_{r,c}$ be defined as in \eqref{eq:g_def}. Let $V_0$ and $V_1$ be sets of pairwise clones of $G$ such that $V_0 \cup V_1$ forms an independent set in $G$. Then,
    \[ h_{r,c}(G) \ge \min \{ h_{r,c}(G[V_0 \to V_1]), h_{r,c}(G[V_1 \to V_0])\}.\]
\end{lemma}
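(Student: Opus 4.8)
The plan is to exploit the linearity of $e(G)$ and the linear-programming nature of $\nu_r^*$ under the clone structure. Write $W = V_0 \cup V_1$, fix a representative vertex $z \in V_1$, and let $G_0 = G[V_0 \to V_1]$ and $G_1 = G[V_1 \to V_0]$ be the two ``polarized'' graphs. Since the vertices of $W$ are pairwise non-adjacent and all have the common neighborhood $N := N(z)$ outside $W$, every $r$-clique of $G$, $G_0$, or $G_1$ uses at most one vertex of $W$; a clique $K$ meeting $W$ in a vertex $w$ consists of $w$ together with an $(r-1)$-clique inside $N$. So the set of $r$-cliques through a given $w\in W$ is in canonical bijection (via the vertex relabelling $w \mapsto w'$) with the set through any other $w'\in W$, and with $r$-cliques through $z$.

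First I would handle $e$: directly from the definition, $e(G_0)$ and $e(G_1)$ differ from $e(G)$ only in the edges from $W$ to $N$; if $a = |V_0|$, $b = |V_1|$, then $e(G) = e_0 + (a+b)|N_{\text{eff}}|$ where... — more cleanly, $e(G_0) - e(G)$ and $e(G_1)-e(G)$ are the two ``extreme'' values obtained by moving all of $W$'s outside-degree to one side, and $e(G)$ is a convex combination: concretely $e(G) = \lambda e(G_0) + (1-\lambda) e(G_1)$ for some $\lambda \in [0,1]$ determined by how the edge-count to $N$ splits (this uses that in $G$ each vertex of $W$ already sees all of $N$, so actually $e(G)=e(G_0)=e(G_1)$ — let me keep the convex-combination formulation to be safe, since the definition of $G[V_0\to V_1]$ rewires edges rather than counting). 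In either reading, $e(G) \ge \min\{e(G_0), e(G_1)\}$ or equals both, which is the easy half of the inequality after scaling by $c$ — but note $c$ may be negative, so I must actually prove $e(G)$ is a \emph{convex combination} of $e(G_0)$ and $e(G_1)$, not merely $\ge \min$. That forces the argument for $\nu_r^*$ to produce a convex combination too.

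The heart of the proof is to show $\nu_r^*(G) \ge \lambda \,\nu_r^*(G_0) + (1-\lambda)\,\nu_r^*(G_1)$ for the \emph{same} $\lambda$. I would take optimal fractional packings $f_0$ of $G_0$ and $f_1$ of $G_1$ and build a fractional packing $f$ of $G$ of weight $\lambda\|f_0\| + (1-\lambda)\|f_1\|$. Cliques not meeting $W$ live identically in all three graphs, so transfer their $f_0$-, $f_1$-weights with the same convex combination. For a clique $K_0$ of $G_0$ meeting $W$ at $w$: its ``body'' $K_0 - w$ is an $(r-1)$-clique in $N$, and in $G$ I may attach that body to \emph{any} vertex of $W$. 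I would redistribute the weight $f_0(K_0)$ among the $|W|$ clones (and similarly for $f_1$), choosing the split so that the resulting load on each edge between $W$ and $N$ is balanced and stays $\le 1$; the edges inside $W$ carry no clique weight since $W$ is independent, and edges inside $N$ see total load $\le \lambda\cdot 1 + (1-\lambda)\cdot 1 = 1$. The combinatorial bookkeeping here — verifying the edge constraints from $W$ to $N$ after redistribution, and pinning down the precise $\lambda$ matching the $e(G)$ decomposition — is the main obstacle; everything else is linear-algebraic packaging. Once $\nu_r^*(G) \ge \lambda\nu_r^*(G_0)+(1-\lambda)\nu_r^*(G_1)$ and $e(G)=\lambda e(G_0)+(1-\lambda)e(G_1)$ are both in hand, adding $c$ times the second to the first gives $h_{r,c}(G) \ge \lambda h_{r,c}(G_0) + (1-\lambda) h_{r,c}(G_1) \ge \min\{h_{r,c}(G_0),h_{r,c}(G_1)\}$, completing the proof.
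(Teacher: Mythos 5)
Your overall skeleton is the paper's: prove that $e(G)$ is an exact convex combination of $e(G[V_0\to V_1])$ and $e(G[V_1\to V_0])$, prove the matching inequality $\nu_r^*(G)\ge \lambda\,\nu_r^*(G_0)+(1-\lambda)\,\nu_r^*(G_1)$ by mixing two optimal fractional packings after averaging them over the clones, and then add $c$ times the edge identity (you are right that an inequality for $e$ would not suffice since $c$ can be negative). However, there is a genuine structural error at the heart of your packing construction. You assert that in $G$ all vertices of $W=V_0\cup V_1$ share the common neighborhood $N=N(z)$, $z\in V_1$. That is false: $V_0$ and $V_1$ are two \emph{different} clone classes with (in general) different neighborhoods -- this is the entire point of the operation $G[V_0\to V_1]$, which rewires $V_0$ to adopt $V_1$'s neighborhood. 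Consequently your key redistribution step, ``in $G$ I may attach that body to any vertex of $W$,'' fails: the body of a $W$-clique of $G[V_0\to V_1]$ lies in $N_G(V_1)$, and in $G$ it extends to an $r$-clique only through the $|V_1|$ vertices of $V_1$ (attaching it to a vertex of $V_0$ need not even give a clique of $G$); symmetrically for $G[V_1\to V_0]$ and $V_0$. This is not deferred bookkeeping -- it is exactly what pins down the convex weight, which you leave undetermined.

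For comparison, the paper's proof takes $\lambda=|V_0|/(|V_0|+|V_1|)$, verifies
$e(G)=\lambda\, e(G[V_1\to V_0])+(1-\lambda)\, e(G[V_0\to V_1])$ by the degree computation $e(G[V_1\to V_0])=e(G)+|V_1|(d_0-d_1)$ and $e(G[V_0\to V_1])=e(G)+|V_0|(d_1-d_0)$, symmetrizes each optimal packing $f_i$ of $G[V_{1-i}\to V_i]$ so that $f_i(Q\cup\{v\})$ does not depend on the clone $v$, and defines $f$ on $G$ by keeping $f_i$ unchanged on cliques meeting $V_i$ and taking the $(\lambda,1-\lambda)$ mixture on cliques avoiding $W$. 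The edge constraints then hold because the cliques inherited from $G[V_{1-i}\to V_i]$ are carried in $G$ only by the $V_i$-clones, so their load on edges outside $W$ is scaled by the factor $|V_i|/(|V_0|+|V_1|)$, and the loads on edges incident to $W$ are verified directly. Your proposal would become correct once you (a) restrict the reattachment of bodies coming from $G[V_{1-i}\to V_i]$ to the clones in $V_i$ only, and (b) carry out the edge-count identity that fixes $\lambda$; as written, the construction produces non-cliques of $G$ and the claimed matching of the two convex combinations is unsubstantiated.
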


\begin{proof}
    For $i \in \{0,1\}$, let $f_i$ denote an optimal fractional $r$-clique packing of $G[V_{1-i} \to V_i]$. Note that
    \[ f_i' = \frac{1}{|Aut(G)|} \sum_{\pi \in Aut(G)} f_i \circ \pi,\]
    is also an optimal fractional $r$-clique packing of $G[V_{1-i} \to V_i]$ such that $\norm{f_i'} = \norm{f_i}$, where $Aut(G)$ denotes the automorphism group of $G$. Without loss of generality, we may suppose $f_i(Q \cup \{v\})$ is the same for every $v \in V_0 \cup V_1$, where $Q$ is an arbitrary clique in the neighborhood of $V_i$ for $i \in \{0,1\}$. Let $f : \mathcal{K}(G) \to \mathbb{R}_{\ge 0}$ be defined as follows, 
    \begin{align*}
        f(K) = \begin{cases}
        \frac{|V_0|}{|V_0| + |V_1|} \cdot f_0(K) + \frac{|V_1|}{|V_0| + |V_1|} \cdot f_1(K), & \text{when}\quad  |K \cap (V_0 \cup V_1)| = 0,\\
        f_i(K), & \text{when}\quad  |K \cap V_i| = 1 \quad\text{and}\quad i \in \{0,1\}.\\\end{cases}
\end{align*}

We first show that $f$ is an $r$-clique packing. First observe that  for $i \in \{0,1\}$, if $e$ is an edge with an endpoint in $V_i$, as $V_0 \cup V_1$ is an independent set in both $G$ and $G[V_{1-i} \to V_i]$, it follows,
\[ \sum_{\substack{K \in \mathcal{K}(G)\\ e \in K}} f(K) = \sum_{\substack{K \in \mathcal{K}(G) \\ e \in K}} f_i(K)  = \sum_{\substack{K \in \mathcal{K}(G[V_{1-i} \to V_i])\\  e \in K}} f_i(K) \le 1.\]
Now suppose $e$ has no endpoint in $V_0 \cup V_1$. First observe that for $i \in \{0,1\}$, \\

\[ \sum_{\substack{K \in \mathcal{K}(G)\\ K \cap V_i \neq \emptyset\\ e \in K }} f(K) = \sum_{\substack{K \in \mathcal{K}(G)\\ K \cap V_i \neq \emptyset\\ e \in K }} f_i(K) = \frac{|V_i|}{|V_0| + |V_1|} \sum_{\substack{ K \in \mathcal{K}(G[V_{1-i} \to V_i])\\ K \cap (V_0 \cup V_1) \neq \emptyset\\ e \in K }} f_i(K). \] 
It follows, 
\begin{align*}
    \sum_{\substack{K \in \mathcal{K}(G)\\ e \in K}} f(K) &= \sum_{i \in \{0,1\}} \Bigg( \sum_{\substack{ K \in \mathcal{K}(G)\\ K \cap (V_0 \cup V_1) = \emptyset\\ e \in K}} \frac{|V_i|}{|V_0| + |V_1|} f_i(K) +  \sum_{\substack{K \in \mathcal{K}(G)\\ K \cap V_i \neq \emptyset\\ e \in K }} f_i(K)  \Bigg)\\
    &= \sum_{i \in \{0,1\}} \Bigg( \frac{|V_i|}{|V_0| + |V_1|}  \sum_{\substack{ K \in \mathcal{K}(G[V_{1-i} \to V_i])\\ K \cap (V_0 \cup V_1) =  \emptyset\\ e \in K}} f_i(K) +  \sum_{\substack{K \in \mathcal{K}(G)\\ K \cap V_i \neq \emptyset\\ e \in K }} f_i(K)  \Bigg)\\
    &=  \sum_{i \in \{0,1\}} \Bigg( \frac{|V_i|}{|V_0| + |V_1|}  \sum_{\substack{ K \in \mathcal{K}(G[V_{1-i} \to V_i])\\ K \cap (V_0 \cup V_1) = \emptyset\\ e \in K}} f_i(K) +  \frac{|V_i|}{|V_0| + |V_1|} \sum_{\substack{ K \in\mathcal{K}(G[V_{1-i} \to V_i])\\ K \cap (V_0 \cup V_1) \neq \emptyset\\ e \in K }} f_i(K)\Bigg)\\
    &= \sum_{i \in \{0,1\}} \frac{|V_i|}{|V_0| + |V_1|} \Bigg(\sum_{\substack{ K \in \mathcal{K}(G[V_{1-i} \to V_i])\\  e \in K}} f_i(K)\Bigg) \le \sum_{i \in \{0,1\}} \frac{|V_i|}{|V_0| + |V_1|} = 1.
\end{align*}
Thus $f$ is indeed an $r$-clique packing. We now perform a similar calculation for $\norm{f}$. 
\begin{align*}
    \norm{f} &= \sum_{K \in \mathcal{K}(G)} f(K) = \sum_{i \in \{0,1\}} \Bigg( \sum_{\substack{ K \in \mathcal{K}(G)\\ K \cap (V_0 \cup V_1) = \emptyset}} \frac{|V_i|}{|V_0| + |V_1|} f_i(K) +  \sum_{\substack{K \in \mathcal{K}(G)\\ K \cap V_i \neq \emptyset}} f_i(K)  \Bigg)\\
    &= \sum_{i \in \{0,1\}} \Bigg( \frac{|V_i|}{|V_0| + |V_1|}  \sum_{\substack{ K \in \mathcal{K}(G[V_{1-i} \to V_i])\\ K \cap (V_0 \cup V_1) = \emptyset}} f_i(K) 
     +  \frac{|V_i|}{|V_0| + |V_1|}  \sum_{\substack{K \in \mathcal{K}(G[V_{1-i} \to V_i])\\ K \cap (V_0 \cup V_1) \neq \emptyset}} f_i(K)  \Bigg)\\
    &= \sum_{i \in \{0,1\}} \frac{|V_i|}{|V_0| + |V_1|} \Bigg( \sum_{K \in \mathcal{K}(G[V_{1-i} \to V_i])} f_i(K) \Bigg) = \sum_{i \in \{0,1\}} \frac{|V_i|}{|V_0| + |V_1|} \norm{f_i}. 
\end{align*}
We now claim  that 
\begin{equation}\label{eq:edget}
    e(G) = \frac{|V_0|}{|V_0| + |V_1|}e(G[V_1 \to V_0]) + \frac{|V_1|}{|V_0| + |V_1|}e(G[V_0 \to V_1]). 
\end{equation}
Let $u \in V_0$ and $v \in V_1$ be arbitrary, and let $d_G(u) = d_0$ and $d_G(v) = d_1$. Then $e(G[V_1 \to V_0]) = e(G) + |V_1| (d_0 - d_1)$ and $e(G[V_0 \to V_1]) = e(G) + |V_0|(d_1 - d_0)$, implying \eqref{eq:edget}. It follows that
\begin{align*}
    h_{r,c}(G) \ge   \norm{f} &= \frac{|V_0|}{|V_0| + |V_1|} \cdot \norm{f_0}+ \frac{|V_1|}{|V_0| + |V_1|} \cdot \norm{f_1}\\
    &=\frac{|V_0|}{|V_0| + |V_1|} \cdot h_{r,c}(G[V_0 \to V_1]) + \frac{|V_1|}{|V_0| + |V_1|} \cdot  h_{r,c}(G[V_1 \to V_0])\\
    &\ge \min \{ h_{r,c}(G[V_0 \to V_1]), h_{r,c}(G[V_1 \to V_0])\}.
\end{align*}
\end{proof}

We remark that Lemma~\ref{lem:symmetrization} holds for a larger class of functions of $h_{r,c}$ which may be of general interest. The proof remains valid if we replace $h_{r,c}$ with
\[ \nu_r^*(G) + g\left(|\mathcal{K}_2(G)|, \ldots, |\mathcal{K}_b(G)| \right),\]
where $g$ is a concave function and $b \ge 2$ is an arbitrary integer. In \cite{BHKNW2024}, an analog of Lemma~\ref{lem:symmetrization} was proved for weighted covers/decompositions and $g := 0$. A similar statement for a broader family of functions of $g$ can be proved in these settings as well. As this is not pertinent to our proof of Theorem~\ref{thm:main}, we omit these details. Iterating Lemma~\ref{lem:symmetrization} yields the following.

\begin{lemma}\label{lem:multipartite}
    Let $r \ge 3$. For every graph $G$  there exists a complete multipartite graph $H$ such that
    \[ \nu_r^*(G) - \frac{2}{r}e(G)  \ge \nu^*(H) - \frac{2}{r}e(H).\]
\end{lemma}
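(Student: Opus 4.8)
The plan is to iterate Lemma~\ref{lem:symmetrization} with the constant $c = -2/r$, so that by \eqref{eq:g_def} we have $h_{r,c}(G) = \nu_r^*(G) - \tfrac{2}{r}e(G)$, precisely the quantity appearing in the statement, and Lemma~\ref{lem:symmetrization} guarantees that this quantity does not increase when we replace a graph by a suitable symmetrization of it. Since Lemma~\ref{lem:symmetrization} is only applicable to sets of pairwise clones, the natural objects to track are the \emph{clone classes} of the current graph, i.e.\ the blocks of the relation ``$uv \notin E$ and $N(u)=N(v)$'', which one checks is an equivalence relation (transitivity uses that $uv\notin E$ together with $N(u)=N(v)$ forces $u\notin N(v)$). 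A graph is complete multipartite precisely when every pair of non-adjacent vertices lies in a common clone class, so the goal is to drive the graph to this state while only decreasing $h_{r,c}$.

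Set $G_0 := G$. Given a graph $G_i$ that is not yet complete multipartite, choose non-adjacent vertices $u,v \in V(G_i)$ that are not clones, and let $V_0, V_1$ be their (distinct) clone classes in $G_i$. The first ingredient is a small structural fact: any two distinct clone classes of a graph are either completely joined or completely non-adjacent; in particular $V_0 \cup V_1$ is an independent set made up of pairwise clones, so the hypotheses of Lemma~\ref{lem:symmetrization} are met and
\[ h_{r,c}(G_i) \;\ge\; \min\bigl\{\, h_{r,c}(G_i[V_0 \to V_1]),\ h_{r,c}(G_i[V_1 \to V_0]) \,\bigr\}. \]
Let $G_{i+1}$ be whichever of the two graphs on the right attains this minimum; then $h_{r,c}(G_{i+1}) \le h_{r,c}(G_i)$.

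Next comes the termination argument. In $G_{i+1}$ all vertices of $V_0 \cup V_1$ have the same neighbourhood and are pairwise non-adjacent, hence belong to a single clone class. Moreover no clone class of $G_i$ gets split in $G_{i+1}$: by the structural fact every clone class $C$ of $G_i$ stands in a single relation (joined or anti-joined) to $V_0$ and a single relation to $V_1$, so every vertex of $C$ undergoes exactly the same local change in passing from $G_i$ to $G_i[V_0 \to V_1]$ (respectively $G_i[V_1 \to V_0]$), and $C$ remains a clone class. Therefore the number of clone classes of $G_{i+1}$ is strictly smaller than that of $G_i$. As this number is a positive integer at most $|V(G)|$, the iteration halts after finitely many steps at a graph $G_m =: H$ in which every pair of non-adjacent vertices is a pair of clones, i.e.\ $H$ is complete multipartite. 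Chaining the inequalities,
\[ \nu_r^*(G) - \tfrac{2}{r}e(G) \;=\; h_{r,c}(G_0) \;\ge\; h_{r,c}(G_1) \;\ge\; \cdots \;\ge\; h_{r,c}(G_m) \;=\; h_{r,c}(H), \]
and by \eqref{eq:g_def} this last quantity equals $\nu^*(H) - \tfrac{2}{r}e(H)$, the right-hand side of the lemma.

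All of the genuine computation sits inside Lemma~\ref{lem:symmetrization}, so only the bookkeeping above remains. I expect the single real (and minor) obstacle to be nailing down the two structural observations — that two clone classes are joined-or-anti-joined as blocks, and that the operation $G_i \mapsto G_i[V_0 \to V_1]$ neither splits an existing clone class nor fails to merge $V_0$ with $V_1$ — since these are exactly what makes the iteration legitimate (the hypotheses of Lemma~\ref{lem:symmetrization} hold at every step), strictly monotone in the number of clone classes, and convergent to a complete multipartite graph.
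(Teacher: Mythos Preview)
Your proposal is correct and follows essentially the same route as the paper: iterate Lemma~\ref{lem:symmetrization} with $c=-2/r$ on a pair of distinct clone classes whose union is independent, observing that this strictly reduces the number of clone classes until the graph is complete multipartite. The paper's proof is terser (it asserts the drop in the number of equivalence classes without spelling out that existing classes do not split), but the argument is the same.
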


\begin{proof}
    Let $G$ be a graph and $r \ge 3$. Define an equivalence relation $\sim$ on $V(G)$ where $u \sim v$ if and only if $u$ and $v$ are clones. Let $V_1,\ldots, V_s$ be the equivalence classes under this relation. If for all distinct $i,j \in [s]$, $V_i \cup V_j$ is not an independent set, then $G$ is a complete multipartite graph. Otherwise, there exists distinct $i,j \in [s]$ such that $V_i \cup V_j$ is an independent set. Applying Lemma~\ref{lem:symmetrization} to $V_i$ and $V_j$ with $c = -2/r$ yields a graph $G'$ such that
    \[ \nu_r^*(G) - \frac{2}{r}e(G)  \ge \nu^*(G') - \frac{2}{r}e(G'),\]
    and the number of equivalence classes of $G'$ induced by $\sim$ is strictly smaller than $s$. A straightforward induction on $s$ yields the claim.
\end{proof}

\section{Proof of Theorem~\ref{thm:main}}\label{sec:proof}
Let $G$ be an $n$-vertex graph and $r \ge 3$. It suffices to prove that
\begin{equation}\label{eq:main}
    \nu_r^*(G) -\frac{2}{r}e(G) \ge -\frac{2}{r} \cdot \left(1 - \frac{1}{r-1}\right)\frac{n^2}{2} = -\frac{r-2}{r(r-1)} n^2.
\end{equation}
By Lemma~\ref{lem:multipartite},  we may suppose that $G$ is a complete multipartite graph with $s$ parts, for some  $s$. 
First, in Claim~\ref{clm:trivial_case},  we handle the case, when $e(G)$ is small, which includes the $s\le r-1$ case. In Claim~\ref{clm:trivial_case} we handle the case $s=r$. Observe, that for $r=2$, both sides of \eqref{eq:main} are $0$. Afterwards, we can start our induction on $r+s$, which involves some careful computations.

 \begin{claim}\label{clm:trivial_case}
 Relation~\eqref{eq:main} holds when 
 \[ e(G) \le \left(1 - \frac{1}{r-1}\right)\frac{n^2}{2} = \frac{r-2}{2(r-1)}n^2.\] In particular, \eqref{eq:main} holds when $s \le r - 1$. 
      \end{claim}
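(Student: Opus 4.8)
\textbf{Proof plan for Claim~\ref{clm:trivial_case}.}

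The plan is to verify the inequality directly from the trivial lower bound $\nu_r^*(G) \ge 0$, which holds since the all-zero mapping is a valid fractional $r$-clique packing. Under the hypothesis $e(G) \le \left(1 - \frac{1}{r-1}\right)\frac{n^2}{2}$, we have
\[
\nu_r^*(G) - \frac{2}{r}e(G) \ge 0 - \frac{2}{r}\left(1 - \frac{1}{r-1}\right)\frac{n^2}{2} = -\frac{r-2}{r(r-1)}n^2,
\]
which is exactly the right-hand side of \eqref{eq:main}. So the first bullet of the claim requires essentially no work beyond noting that $\frac{2}{r}$ is positive and that edge-count monotonicity of the bound is in our favor.

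For the ``in particular'' part, I would bound $e(G)$ from above when $G$ is complete multipartite with $s \le r-1$ parts. The complete multipartite graph on $n$ vertices with at most $r-1$ parts has at most $t_{r-1}(n)$ edges by the Turán-type extremal property (the balanced complete $(r-1)$-partite graph maximizes edge count among all graphs with no $K_r$, and in particular among complete multipartite graphs with at most $r-1$ parts). Since $t_{r-1}(n) \le \left(1 - \frac{1}{r-1}\right)\frac{n^2}{2}$, the hypothesis of the first part is satisfied, and \eqref{eq:main} follows. One should double-check the convention on $t_{r-1}$ here: the quantity $\left(1-\frac{1}{r-1}\right)\frac{n^2}{2}$ is the standard real-valued relaxation that upper bounds $t_{r-1}(n)$ for all $n$, so the chain of inequalities is clean regardless of divisibility of $n$ by $r-1$.

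There is no real obstacle in this claim; it is a bookkeeping step isolating the ``easy regime'' where the target bound is so weak that the trivial packing suffices. The only minor point to be careful about is ensuring the direction of the inequality: because the coefficient $-\frac{2}{r}$ multiplying $e(G)$ is negative, a \emph{smaller} edge count makes $\nu_r^*(G) - \frac{2}{r}e(G)$ \emph{larger}, so the worst case within this regime is $e(G)$ equal to the threshold, which is precisely where the right-hand side of \eqref{eq:main} is attained with equality by the trivial bound. The substantive content of the theorem will instead lie in the complementary case $s \ge r$, where $G$ genuinely contains $r$-cliques and one must exhibit a nontrivial fractional packing.
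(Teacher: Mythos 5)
Your proposal is correct and follows essentially the same route as the paper: the trivial bound $\nu_r^*(G)\ge 0$ handles the first part, and the second part reduces to bounding the edge count of a complete multipartite graph with $s\le r-1$ parts by $\left(1-\frac{1}{r-1}\right)\frac{n^2}{2}$. The only cosmetic difference is that you invoke the Tur\'an-type extremal property, whereas the paper bounds $e(G)\le\binom{s}{2}\left(\frac{n}{s}\right)^2=\left(1-\frac{1}{s}\right)\frac{n^2}{2}$ by direct optimization over the part sizes; both justifications are valid.
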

\begin{proof}
As $\nu_r^*(G) \ge 0$, Relation~\eqref{eq:main} holds when $e(G) \le \frac{r-2}{2(r-1)}n^2$. Standard optimization techniques yields for  a complete $s$-partite graph $G$, where  $s \le r-1$, that
\[e(G)\le \binom{s}{2}\left(\frac{n}{s}\right)^2 = \left(1 - \frac{1}{s}\right)\frac{n^2}{2} \le \left(1 - \frac{1}{r-1}\right)\frac{n^2}{2}.\]
\end{proof}

As $G$ is complete multipartite, denote by $V_1, \ldots,  V_s$  the parts of $G$, and for all $i \in [s]$, let $x_i := |V_i|/n$. Note $\sum_i x_i = 1$, and we may further suppose $x_1 \ge \ldots \ge x_s \ge 0$.

\begin{claim}\label{clm:rclass}   
          Relation~\eqref{eq:main}  holds when $s = r\ge 3$, i.e., when $G$ is complete $r$-partite.
      \end{claim}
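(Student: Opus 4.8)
The plan is to prove Claim~\ref{clm:rclass} by writing down an explicit fractional $r$-clique packing of the complete $r$-partite graph $G$ with parts of sizes $x_1 n \ge \cdots \ge x_r n$, and bounding $e(G)$ from above in terms of the $x_i$'s. Since $G$ is complete $r$-partite, its $r$-cliques are exactly the transversals: one vertex from each part. The natural candidate is the \emph{uniform} fractional packing, assigning every $r$-clique the same value $\lambda$, chosen as large as the constraints allow. For an edge $e$ between parts $V_i$ and $V_j$, the number of $r$-cliques containing $e$ is $\prod_{k \ne i,j} x_k n$, so the binding constraint comes from the pair $(i,j)$ maximizing this product, i.e.\ the pair of \emph{smallest} parts, $(r-1,r)$. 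Thus $\lambda = \left(\prod_{k \le r-2} x_k n\right)^{-1}$ (when all parts are nonempty), giving $\nu_r^*(G) \ge \lambda \cdot \prod_{k} x_k n = x_{r-1} x_r n^2$. One should handle separately the degenerate cases where some $x_i = 0$: if $x_r = 0$ then $G$ has fewer than $r$ nonempty parts and Claim~\ref{clm:trivial_case} applies. Note also $e(G) = \binom{n}{2}\text{-type sum} = \frac{n^2}{2}\big(1 - \sum_k x_k^2\big) - \frac{n}{2}$, so up to lower-order terms $e(G) = \tfrac{n^2}{2}(1 - \sum_k x_k^2)$.

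With these two estimates, after dividing by $n^2$, Relation~\eqref{eq:main} reduces to the purely analytic inequality
\[
x_{r-1} x_r - \frac{1}{r}\left(1 - \sum_{k=1}^r x_k^2\right) \ge -\frac{r-2}{r(r-1)},
\]
subject to $x_1 \ge \cdots \ge x_r \ge 0$ and $\sum_k x_k = 1$. Equivalently, multiplying by $r$, I want to show $\sum_k x_k^2 + r\, x_{r-1} x_r \ge 1 - \tfrac{r-2}{r-1} = \tfrac{1}{r-1}$. The plan is to prove this by a smoothing/convexity argument: fix $x_{r-1}, x_r$ (hence their product and $x_{r-1}^2 + x_r^2$) and $S := x_1 + \cdots + x_{r-2} = 1 - x_{r-1} - x_r$; then $\sum_{k=1}^{r-2} x_k^2$ is minimized, subject to each $x_k \ge x_{r-1}$, when the $x_k$ are as equal as possible, i.e.\ all equal to $S/(r-2)$ — and one must check this equal value is indeed $\ge x_{r-1}$, which holds because $x_{r-1}$ is among the two smallest. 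So it suffices to verify the inequality on the one-parameter-per-small-coordinate family $x_1 = \cdots = x_{r-2} = \tfrac{1-a-b}{r-2}$, $x_{r-1} = a \ge b = x_r$. That leaves a two-variable inequality which should factor nicely — the equality case is the balanced Turán-type configuration $x_1 = \cdots = x_r = 1/r$, where $\sum x_k^2 = 1/r$ and $r x_{r-1}x_r = r/r^2 = 1/r$, summing to $2/r$; and indeed $2/r \ge \tfrac{1}{r-1}$ iff $2(r-1) \ge r$ iff $r \ge 2$, with slack, so the claimed bound is comfortably true and the extremal configuration for \eqref{eq:main} among $r$-partite graphs is presumably \emph{not} the balanced one but rather one with $b = x_r$ small. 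In that regime ($b \to 0$) we approach $\sum_{k<r} x_k^2 \ge \tfrac{1}{r-1}$ by the $(r-1)$-variable power-mean bound with $\sum_{k<r} x_k = 1$, again with equality at the balanced $(r-1)$-partite point — this is exactly the boundary between Claim~\ref{clm:rclass} and Claim~\ref{clm:trivial_case}, which is reassuring.

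The main obstacle I anticipate is twofold. First, the bookkeeping around degenerate/boundary cases: when parts have equal size, the fractional packing value $\lambda$ and the "smallest pair" are not unique, and when some $x_i=0$ the product $\prod_{k\le r-2} x_k$ can vanish, so one must carefully argue that either $G$ effectively has $\le r-1$ parts (reduce to Claim~\ref{clm:trivial_case}) or all relevant products are positive. Second, even granting the reduction to the equal-small-parts family, the resulting two-variable polynomial inequality needs to be verified cleanly — I would aim to rewrite $\sum_k x_k^2 + r x_{r-1}x_r - \tfrac{1}{r-1}$ as a manifestly nonnegative expression (a sum of squares, or a product of nonnegative factors coming from $a \ge b$ and $\tfrac{1-a-b}{r-2} \ge a$), rather than appealing to a numeric optimum. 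A secondary subtlety is that I have been cavalier about the $O(n)$ error in $e(G)$; since \eqref{eq:main} as stated is an exact inequality, I should double-check that the lower-order term $-\tfrac{n}{2}$ in $e(G)$ only helps (it makes the left side of \eqref{eq:main} larger), so no harm is done, and similarly that using the \emph{fractional} optimum $\lambda$ (which need not be rational-denominator clean) causes no issue since $\nu_r^*$ is a genuine supremum over real-valued packings.
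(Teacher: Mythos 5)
Your proposal is correct and follows essentially the same route as the paper: a uniform fractional packing giving $\nu_r^*(G)\ge x_{r-1}x_r n^2$, smoothing the $r-2$ largest parts to equal size (the paper's $\tfrac{r-3}{2(r-2)}(1-x_{r-1}-x_r)^2$ edge bound is exactly your $\sum_{k\le r-2}x_k^2\ge (1-x_{r-1}-x_r)^2/(r-2)$ step), and then a two-variable quadratic inequality which does admit the manifestly nonnegative form you anticipated, namely $\tfrac{r-1}{r-2}\left(x_{r-1}+x_r-\tfrac{1}{r-1}\right)^2+(r-2)\,x_{r-1}x_r\ge 0$, matching the paper's closing identity. (Minor note: $e(G)=\tfrac{n^2}{2}\bigl(1-\sum_k x_k^2\bigr)$ holds exactly for complete multipartite $G$, with no $-n/2$ correction, so that worry disappears.)
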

\begin{proof}

Assigning a uniform weighting to $r$-cliques, it is straightforward to verify $\nu^*_r(G) = x_{r-1}x_r$. Note,
\[ \frac{e(G)}{n^2} \le x_{r-1}x_r+ (x_{r-1}+x_r)(1-x_{r-1}-x_r) + \frac{r-3}{2(r-2)}(1-x_{r-1}-x_r)^2.\]
By~\eqref{eq:main} is suffices to show the following: 
\[ x_{r-1}x_r - \frac{2}{r}\left(  x_{r-1}x_r+ (x_{r-1}+x_r)(1-x_{r-1}-x_r) + \frac{r-3}{2(r-2)}(1-x_{r-1}-x_r)^2 \right)
\ge  - \frac{r-2}{r(r-1)}.\]
Multiplying by $r$ and rearranging,
\[ (r-2)x_{r-1}x_r \ge {2}\left(   (x_{r-1}+x_r)(1-x_{r-1}-x_r) + \frac{r-3}{2(r-2)}(1-x_{r-1}-x_r)^2 \right)
 - \frac{r-2}{r-1}. \]
Multiplying with $r-2$ and further algebraic rearranging yields the following. 
\[ (r-2)^2x_{r-1}x_r \ge (2r-4) (x_{r-1}+x_r)(1-x_{r-1}-x_r) + (r-3)(1-x_{r-1}-x_r)^2 
 - \frac{(r-2)^2}{r-1}. \]
Further rearranging results in the following,
\begin{align*}
   \frac{r^2 - 4r + 4}{r-1} + (r^2 - 4r + 4)x_{r-1}x_r &\ge (1-x_{r-1}-x_r)[(2r-4)(x_{r-1}+x_r) + (r-3)(1 - x_{r-1}-x_r)] \\
   &= (1-x_{r-1}-x_r) [r-3+ (r-1) x_{r-1}+(r-1) x_r]\\
   &= r - 3 + 2x_{r-1} + 2x_r - 2(r-1)x_{r-1}x_r - (r-1)x_{r-1}^2 - (r-1)x_r^2.
\end{align*}
Rearranging more, we have that the above inequality is equivalent to the following.
\begin{align*}
& \frac{1}{r-1} + (r^2-2r+2)x_{r-1}x_r +(r-1)x_{r-1}^2 +(r-1)x_{r}^2 - 2x_{r-1} - 2x_r\\
=\  & (r-1)\left(x_{r-1}+ x_r-\frac{1}{r-1}\right)^2 +(r-2)^2x_{r-1}x_r
\ \ge\  0.
\end{align*}
\end{proof}

 We now proceed with induction on  $r+s$. As a base case, we already settled the case when $r=2$, and by Claims~\ref{clm:trivial_case}~and~\ref{clm:rclass}, we already handled the cases when  $s \le r$. Hence, we will assume that $s>r\ge 3$, and we know that the induction hypothesis holds for the pairs $(r-1,s-1)$ and $(r,s-1)$.\\

 Let $k_G \in \mathbb{R}$ such that $k_G = e(G) - t_{r-1}(G)$, i.e.,
\begin{equation}\label{eq:e(G)}
    e(G) = \left(1 - \frac{1}{r-1}\right)\frac{n^2}{2} + k_G  = \frac{r-2}{2(r-1)}n^2 + k_G. 
\end{equation}
 By Claim~\ref{clm:trivial_case}, we may suppose $e(G) > \frac{r-2}{2(r-1)}n^2$, which implies $k_G > 0$. It suffices to show that 
$\nu_r^*(G) \ge 2k_G/r$. Define $H := G - V_1$. We shall need to compute $e(G)$ in two additional ways. Let $t_H \in \mathbb{R}$ such that $t_H = e(H) - t_{r-2}((1 - x_1)n)$, then,
\begin{align*}
     e(G) &= e(H) + x_1(1-x_1)n^2= 
\frac{1}{2} \left( 1 - \frac{1}{r-2}\right) (1-x_1)^2n^2 +x_1(1-x_1)n^2+ t_H\\
&= \frac{1}{2} \left( 1 - \frac{1}{r-2}\right) n^2 -
x_1 \left( 1 - \frac{1}{r-2}\right) n^2
+ \frac{x_1^2}{2} \left( 1 - \frac{1}{r-2}\right) n^2
+x_1(1-x_1)n^2+ t_H\\
&=  \frac{1}{2} \left( 1 - \frac{1}{r-2}\right) n^2 + \frac{x_1}{r-2} n^2 - \frac{x_1^2(r-1)}{2(r-2
)}n^2 + t_H.
\end{align*}

 By \eqref{eq:e(G)}, 
\begin{equation}\label{eq:t_H}
    t_H = k_G  +  \frac{n^2}{2(r-1)(r-2)}   
- \frac{x_1}{r-2} n^2 + \frac{x_1^2(r-1)}{2(r-2)}n^2.
\end{equation}

Now let $k_H \in \mathbb{R}$ such that $k_H = e(H) - t_{r-1}((1 - x_1)n)$, then,
\begin{align*}
    e(G) &= e(H) + x_1(1-x_1)n^2= 
\frac{1}{2} \left( 1 - \frac{1}{r-1}\right) (1-x_1)^2n^2 +x_1(1-x_1)n^2+ k_H\\
&=  \frac{1}{2} \left( 1 - \frac{1}{r-1}\right) n^2 -
x_1 \left( 1 - \frac{1}{r-1}\right) n^2
+ \frac{x_1^2}{2} \left( 1 - \frac{1}{r-1}\right) n^2
+x_1(1-x_1)n^2+ k_H\\
&= \frac{1}{2} \left( 1 - \frac{1}{r-1}\right) n^2 + \frac{x_1}{r-1} n^2 - \frac{rx_1^2}{2(r-1)}n^2 + k_H.
\end{align*}

In particular, by \eqref{eq:e(G)},
\begin{equation}\label{eq:k_H}
    k_H = k_G - \frac{x_1}{r-1}n^2 +\frac{rx_1^2}{2(r-1)}n^2.
\end{equation}

\begin{claim}\label{clm:t_H_bound}
    We have that $k_G \le t_H$. In particular, $t_H \le 0 $ implies $k_G \le 0$.
\end{claim}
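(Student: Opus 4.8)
\textbf{Proof proposal for Claim~\ref{clm:t_H_bound}.} The plan is to read off $t_H - k_G$ directly from \eqref{eq:t_H} and recognize the resulting expression as a nonnegative quantity. Subtracting $k_G$ from both sides of \eqref{eq:t_H} gives
\[ t_H - k_G = \frac{n^2}{2(r-2)}\left( \frac{1}{r-1} - 2x_1 + (r-1)x_1^2 \right).\]
Since $r \ge 3$, the factor $\frac{n^2}{2(r-2)}$ is positive, so it suffices to show the quadratic in $x_1$ inside the parentheses is nonnegative. I would factor it as a perfect square: multiplying through by $r-1$,
\[ (r-1)\left( \frac{1}{r-1} - 2x_1 + (r-1)x_1^2 \right) = (r-1)^2 x_1^2 - 2(r-1)x_1 + 1 = \big( (r-1)x_1 - 1 \big)^2 \ge 0.\]
Hence $\frac{1}{r-1} - 2x_1 + (r-1)x_1^2 \ge 0$, and therefore $t_H - k_G \ge 0$, i.e., $k_G \le t_H$.

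For the ``in particular'' statement, simply observe that if $t_H \le 0$, then combining with $k_G \le t_H$ gives $k_G \le 0$ immediately.

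I do not expect any genuine obstacle here: the claim is a one-line consequence of \eqref{eq:t_H} once one spots the perfect-square factorization $(r-1)^2x_1^2 - 2(r-1)x_1 + 1 = ((r-1)x_1-1)^2$. The only point worth a moment's care is verifying that the denominators $r-2$ and $r-1$ are positive, which holds because $r \ge 3$ throughout this part of the argument (recall we are in the case $s > r \ge 3$).
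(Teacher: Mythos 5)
Your proof is correct and is essentially the paper's own argument: both read $t_H - k_G$ off from \eqref{eq:t_H} and reduce the nonnegativity to the perfect square $\left((r-1)x_1 - 1\right)^2 \ge 0$, which the paper writes in the equivalent form $(r-1)\left(x_1 - \frac{1}{r-1}\right)^2 \ge 0$. No issues.
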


\begin{proof}
    By \eqref{eq:t_H}, it suffices to prove the following inequality.    \\

 \[      \frac{n^2}{2(r-1)(r-2)}   + \frac{x_1^2(r-1)}{2(r-2)} n^2 \ge \frac{x_1}{r-2} n^2.\] 
Multiplying by $2(r-2)/n^2$, it  can be rewritten as 
 \[ (r-1)\left(x_1-\frac{1}{r-1}\right)^2\ge 0. \]
\end{proof}

Define
\begin{equation}\label{eq:alpha}
    \alpha := \min \left\{ \frac{1}{n x_1}, \frac{r-2}{n(1-x_1)}\right\}.
\end{equation}

\begin{claim}\label{clm:H_i_greedy}
   There exists an $r$-clique packing $f'$ of $G$  such that for every $uv \in E(H)$ we have
    \[\sum_{\substack{ K \in \mathcal{K}(H)\\ uv \in E(K)}} f'(K) \le  nx_1 \alpha \quad\quad\text{ and }\quad\quad \norm{f'} \ge nx_1\alpha\frac{2t_H}{r-1}.\]
\end{claim}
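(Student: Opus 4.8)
\textbf{Proof proposal for Claim~\ref{clm:H_i_greedy}.}

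The plan is to build $f'$ by taking the union of two packings: one supported on $r$-cliques of $G$ that use the large part $V_1$, and one on $r$-cliques entirely inside $H = G - V_1$, then rescale by $\alpha$. Since $G$ is complete multipartite, an $r$-clique of $G$ meeting $V_1$ corresponds to an $(r-1)$-clique of $H$ together with one vertex of $V_1$; applying the induction hypothesis to $H$ (which is complete $(s-1)$-partite, so the pair $(r-1,s-1)$ is available) in its fractional form, we get a fractional $(r-1)$-clique packing $g$ of $H$ with $\norm{g} = \nu_{r-1}^*(H) \ge \frac{2}{r-1}e(H) - \frac{r-3}{(r-1)(r-2)}(1-x_1)^2 n^2 = \frac{2}{r-1}t_H$, where the last equality uses the definition of $t_H$ in the line preceding \eqref{eq:t_H}. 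Spreading each such $(r-1)$-clique weight uniformly over the $|V_1| = nx_1$ choices of apex in $V_1$ turns $g$ into a fractional $r$-clique packing $\tilde f$ of $G$ with all cliques meeting $V_1$; here each edge $uv \in E(H)$ receives total weight $\sum_{K \ni uv} \tilde f(K) = \sum_{Q \ni uv} g(Q) \le 1$, and each edge between $V_1$ and $H$ receives at most $1/(nx_1) \cdot (\text{max edge-load of } g) \le 1/(nx_1)$. Rescaling $\tilde f$ by $nx_1\alpha \le 1$ (valid by the definition of $\alpha$ in \eqref{eq:alpha}) gives a fractional $r$-clique packing $f'$ of $G$ with $E(H)$-edge-load at most $nx_1\alpha \cdot 1 = nx_1\alpha$ and total weight $\norm{f'} = nx_1\alpha \cdot \norm{g} \ge nx_1\alpha \cdot \frac{2t_H}{r-1}$, exactly as claimed.

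The two key points to verify carefully are: (i) that the edge-loads on the $V_1$-to-$H$ edges really are at most $1/(nx_1)$ before rescaling, so that after multiplying by $nx_1\alpha$ they are at most $\alpha \le \frac{r-2}{n(1-x_1)}$, which in turn is at most $1$ for $x_1 \le (r-2)/(r-1)$ — and if $x_1 > (r-2)/(r-1)$ one must instead argue directly or observe this forces $s \le r-1$-type structure; and (ii) that the $g$ produced by induction can be taken symmetric enough (averaged over $\mathrm{Aut}(H)$, as in Lemma~\ref{lem:symmetrization}) that the uniform spreading over apexes in $V_1$ genuinely keeps every $V_1$-to-$H$ edge-load bounded by the stated amount rather than just on average.

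The main obstacle I anticipate is bookkeeping the edge constraint on edges incident to $V_1$: a priori the fractional $(r-1)$-clique packing $g$ of $H$ loads some edge $uv \in E(H)$ up to $1$, and when we attach apexes from $V_1$, every edge from $V_1$ to $u$ (or to $v$) inherits a share of that load, but divided among the $nx_1$ apexes; the cleanest way to make this rigorous is to note that the load a single edge $v w$ ($v \in V_1$, $w \in V(H)$) receives equals $\frac{1}{nx_1}\sum_{Q \in \mathcal{K}_{r-1}(H),\, w \in Q} g(Q) \le \frac{1}{nx_1}\cdot(\text{degree-}1\text{-type bound})$, and the relevant sum $\sum_{Q \ni w} g(Q)$ is at most the edge-load of $g$ on any edge at $w$, hence at most $1$; this requires only that every vertex $w$ of $H$ that is used by $g$ has at least one incident edge, which holds trivially. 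Once this is in place, the choice $\alpha = \min\{1/(nx_1), (r-2)/(n(1-x_1))\}$ is precisely engineered so that $nx_1\alpha \le 1$ and $nx_1 \alpha \cdot \frac{1}{nx_1} = \alpha \le 1$ simultaneously, so no further case analysis is needed and the claim follows.
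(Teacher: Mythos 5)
Your construction is the same as the paper's (extend an inductively obtained fractional $(r-1)$-clique packing of $H$ by apexes in $V_1$ and scale so that the final weight on each extension $Q\cup\{v\}$ is $\alpha\, g(Q)$), and your computation $\frac{2}{r-1}e(H)-\frac{r-3}{(r-1)(r-2)}(1-x_1)^2n^2=\frac{2}{r-1}t_H$ is right. However, your verification of the packing constraint on the edges between $V_1$ and $H$ has a genuine error. You claim that for $w\in V(H)$ the vertex load $\sum_{Q\in\mathcal{K}_{r-1}(H),\, w\in Q} g(Q)$ is ``at most the edge-load of $g$ on any edge at $w$, hence at most $1$.'' This is false: distinct $(r-1)$-cliques through $w$ may use disjoint sets of edges at $w$, so the vertex load is only bounded by summing edge-loads over \emph{all} edges at $w$ and dividing by $r-2$ (each clique through $w$ uses $r-2$ such edges), giving $\sum_{Q\ni w} g(Q)\le \deg_H(w)/(r-2)\le (1-x_1)n/(r-2)$, which is of order $n$, not $1$. (For $r=3$, where $g$ is the all-ones weighting of $E(H)$, the vertex load is literally $\deg_H(w)$.) A quick sanity check that your bound is too strong: if it were true, one could always take $\alpha=1/(nx_1)$, and then $\norm{f'}\ge \frac{2t_H}{r-1}\ge \frac{2k_G}{r}$ by Claim~\ref{clm:t_H_bound}, so the theorem would follow immediately and Claim~\ref{clm:alpha} and the ensuing case analysis would be pointless.

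The correct accounting — which is exactly the paper's — is: after scaling, the load on an edge $vw$ with $v\in V_1$ equals $\alpha\sum_{Q\ni w} g(Q)\le \alpha\cdot\frac{(1-x_1)n}{r-2}\le 1$, and this is precisely why the second term $\frac{r-2}{n(1-x_1)}$ appears in the definition of $\alpha$ in \eqref{eq:alpha}; your reading of $\alpha$ as ``engineered so that $nx_1\alpha\le 1$ and $\alpha\le 1$'' misses its actual role, and the associated worry about $x_1>(r-2)/(r-1)$ is a red herring. Your point (ii) about symmetrizing $g$ over $\mathrm{Aut}(H)$ is also unnecessary: spreading uniformly over the $nx_1$ apexes gives per-edge bounds directly, with no averaging argument needed. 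With the vertex-load bound corrected, the rest of your write-up matches the paper's proof.
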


\begin{proof} 
     If $s>r > 3$, then by induction on $s$ and $r$, there is an   optimal $(r - 1)$-clique packing $h$ 
of $H$ such that $\norm{h} \ge \frac{2}{r-1}t_H$. Otherwise, if $r = 3$, then we let $h$ denote the identity map of $E(G)$ (which is essentially the same as reducing to the $r-1=2$ case),  and trivially  $\norm{h} = e(H) \ge \frac{2}{r-1}t_H$. We will extend $h$ to a fractional $r$-clique packing in $G$, by adding vertices from $V_1$ to each $(r-1)$-clique in the support of $h$. 
    Define the map $f' : \mathcal{K}_r(G) \to \mathbb{R}_{\ge 0}$ where
    \[ f'(K) = \begin{cases} \alpha \cdot  h(Q) &\text{ if } K \cap V_1 \neq \emptyset\text{ and } Q\subset K;\\
    0 &\text{ otherwise}.\end{cases}\]
     We first show that $f'$  satisfies the packing constraints. We break the proof into two cases. Let $uv \in E(G)$ where  $v \in V_1$ and $u \not \in V_1$. As $u$ has degree at most $(1-x_1)n$ in $H$ and an  $(r-1)$-clique is an  $(r-2)$-regular graph, we have 
    \[ \sum_{\substack{ Q \in \mathcal{K}_{r-1}(H)\\ u \in Q}} h(Q) \le \frac{(1-x_1)n}{r-2}.\]
    By the definition of $\alpha$, see \eqref{eq:alpha}, we conclude
    \[ \alpha  \cdot \hspace{-1em} \sum_{\substack{K \in \mathcal{K}_r(G) \\ uv \in E(K)}} f'(K) \le 1.\]
    Now assume $uv \in E(H)$. In this case we have
    \[ \sum_{\substack{ K \in \mathcal{K}_r(G)\\ uv \in E(K)}} f'(K)  = nx_1\alpha \cdot \hspace{-1em} \sum_{\substack{ Q \in \mathcal{K}_{r-1}(H)\\ uv \in E(Q)}} h(Q)   \le nx_1 \alpha \le 1,\]
    where the last inequality follows from  \eqref{eq:alpha}.
    We  conclude $f'$ is indeed a packing. 
    Finally, we have,
    \[ \norm{f'} = nx_1\alpha \cdot  \norm{h} \ge  nx_1\alpha\frac{2t_H}{r-1}.\]  
\end{proof}

If  $\norm{f'}  \ge \frac{2k_G}{r}$ then we are done. For the rest of the proof we assume that it does not hold. As an immediate consequence of Claim~\ref{clm:H_i_greedy}, we can determine $\alpha$. 

\begin{claim}\label{clm:alpha}
  If  $\norm{f'}  < \frac{2k_G}{r}$ then  
    \[ \alpha = \frac{r-2}{n(1-x_1)}.\]
\end{claim}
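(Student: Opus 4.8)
The goal is to show that if $\norm{f'} < \frac{2k_G}{r}$ then the minimum in the definition \eqref{eq:alpha} of $\alpha$ is achieved by the second term, i.e. $\frac{r-2}{n(1-x_1)} \le \frac{1}{nx_1}$, equivalently $(r-1)x_1 \le 1$, i.e. $x_1 \le \frac{1}{r-1}$. So the plan is to argue by contradiction: assume $\alpha = \frac{1}{nx_1}$, which is the case precisely when $x_1 \ge \frac{1}{r-1}$, and derive $\norm{f'} \ge \frac{2k_G}{r}$ using the bound from Claim~\ref{clm:H_i_greedy}.

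First I would substitute $\alpha = \frac{1}{nx_1}$ into the conclusion $\norm{f'} \ge nx_1\alpha \frac{2t_H}{r-1}$ of Claim~\ref{clm:H_i_greedy}, which simplifies to $\norm{f'} \ge \frac{2t_H}{r-1}$. By Claim~\ref{clm:t_H_bound} we know $k_G \le t_H$, so $\norm{f'} \ge \frac{2t_H}{r-1} \ge \frac{2k_G}{r-1}$. Since $k_G > 0$ (we are in the nontrivial case where $e(G) > \frac{r-2}{2(r-1)}n^2$) and $r \ge 3$, we have $\frac{2k_G}{r-1} \ge \frac{2k_G}{r}$, hence $\norm{f'} \ge \frac{2k_G}{r}$, contradicting the hypothesis $\norm{f'} < \frac{2k_G}{r}$.

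Therefore the assumption $\alpha = \frac{1}{nx_1}$ is impossible, and since $\alpha$ is the minimum of the two quantities in \eqref{eq:alpha}, it must equal the other one, $\frac{r-2}{n(1-x_1)}$. (One should also handle the degenerate boundary case $x_1 = \frac{1}{r-1}$, where both terms in the minimum are equal; there the stated identity holds trivially.) This argument is short and I do not anticipate a real obstacle; the only care needed is making sure $k_G > 0$ is indeed available at this point in the proof — which it is, since Claim~\ref{clm:trivial_case} lets us assume $e(G) > \frac{r-2}{2(r-1)}n^2$ — and that the chain of inequalities $\frac{2t_H}{r-1} \ge \frac{2k_G}{r-1} \ge \frac{2k_G}{r}$ uses the sign of $k_G$ correctly in the last step.
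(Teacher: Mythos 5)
Your proposal is correct and takes essentially the same route as the paper: assume $\alpha = 1/(nx_1)$, then Claim~\ref{clm:H_i_greedy} gives $\norm{f'} \ge \frac{2t_H}{r-1}$, and combining with $t_H \ge k_G$ (Claim~\ref{clm:t_H_bound}) and $k_G > 0$ yields $\norm{f'} \ge \frac{2k_G}{r}$, contradicting the hypothesis. Your extra remark about the boundary case $x_1 = \frac{1}{r-1}$ is a harmless refinement of the same argument.
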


\begin{proof}
    Suppose otherwise, then by \eqref{eq:alpha} we have that $\alpha = 1/(nx_1)$. Let $f'$ be the $r$-clique packing of $G$ as guaranteed by Claim~\ref{clm:H_i_greedy}.  By Claims~\ref{clm:t_H_bound}~and~\ref{clm:H_i_greedy}, we have the following contradiction: 
    \[ \norm{f'} \ge nx_1 \alpha \frac{2 t_H}{r-1} \ge \frac{2k_G}{r}.\]
\end{proof}

\begin{claim}\label{clm:k_ub}
    The following holds:
    \begin{align*} 
    k_G &< \frac{n^2}{2(r-1)} - x_1\frac{n^2}{2}.\\
    \end{align*}
\end{claim}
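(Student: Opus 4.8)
The plan is to leverage Claim~\ref{clm:alpha}, which tells us that under the standing assumption $\norm{f'} < 2k_G/r$ we have $\alpha = (r-2)/(n(1-x_1))$, equivalently $\frac{1}{nx_1} \ge \frac{r-2}{n(1-x_1)}$, i.e. $1 - x_1 \ge (r-2)x_1$, so $x_1 \le \frac{1}{r-1}$. The other input is the lower bound on $\norm{f'}$ from Claim~\ref{clm:H_i_greedy}, namely $\norm{f'} \ge n x_1 \alpha \cdot \frac{2t_H}{r-1}$. Substituting the now-known value of $\alpha$ gives $\norm{f'} \ge \frac{2(r-2)x_1 t_H}{(r-1)(1-x_1)}$. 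Combining this with the assumption $\norm{f'} < 2k_G/r$ yields an inequality relating $k_G$, $t_H$, and $x_1$; then I would plug in the formula \eqref{eq:t_H} for $t_H$ in terms of $k_G$, $x_1$ and $n$, and solve for $k_G$.

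Concretely, the first step is to write out $\frac{2(r-2)x_1}{(r-1)(1-x_1)} t_H < \frac{2k_G}{r}$ and clear denominators to get $r(r-2)x_1 \, t_H < (r-1)(1-x_1) k_G$. Next, substitute $t_H = k_G + \frac{n^2}{2(r-1)(r-2)} - \frac{x_1 n^2}{r-2} + \frac{x_1^2(r-1)n^2}{2(r-2)}$ from \eqref{eq:t_H}. This turns the inequality into a linear inequality in $k_G$ (the $k_G$ terms are $r(r-2)x_1 k_G$ on the left and $(r-1)(1-x_1)k_G$ on the right); collecting them gives a coefficient $(r-1)(1-x_1) - r(r-2)x_1 = (r-1) - (r-1+r(r-2))x_1 = (r-1) - (r^2-r-1)x_1$ multiplying $k_G$ on the net right-hand side, while the $n^2$ terms from $t_H$ get moved to the other side with factor $r(r-2)x_1$.

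The main obstacle I expect is the bookkeeping: after the substitution one has to simplify the $n^2$-coefficient $r(r-2)x_1 \left( \frac{1}{2(r-1)(r-2)} - \frac{x_1}{r-2} + \frac{x_1^2(r-1)}{2(r-2)} \right) = \frac{rx_1}{2(r-1)}(1 - 2(r-1)x_1 + (r-1)^2 x_1^2) = \frac{rx_1}{2(r-1)}(1-(r-1)x_1)^2$, and then divide through by the $k_G$-coefficient. One must check that this coefficient $(r-1)-(r^2-r-1)x_1$ has the right sign on the relevant range $0 < x_1 \le \frac{1}{r-1}$ (it is positive there since $(r^2-r-1)/(r-1) > r-1$), so that dividing preserves the inequality direction, and then verify that the resulting bound on $k_G$ is at most $\frac{n^2}{2(r-1)} - \frac{x_1 n^2}{2}$. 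That final comparison should reduce, after multiplying out, to a manifestly nonnegative expression — most likely a constant times $x_1 (1-(r-1)x_1)^2$ or a similar perfect-square-type term, exactly in the style of the identities closing Claims~\ref{clm:rclass} and~\ref{clm:t_H_bound} — so the genuine work is purely the algebraic reduction, with no conceptual difficulty beyond being careful with signs and the constraint $x_1 \le 1/(r-1)$.
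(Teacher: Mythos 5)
Your plan has a genuine gap: the two facts you use --- the bound $\norm{f'}\ge nx_1\alpha\,\frac{2t_H}{r-1}=\frac{2(r-2)x_1}{(r-1)(1-x_1)}t_H$ from Claim~\ref{clm:H_i_greedy} and the standing assumption $\norm{f'}<\frac{2k_G}{r}$ --- are too weak to imply the claim, so no algebraic reduction can close the argument. Carrying out your own computation (which is correct as far as it goes), the combined inequality becomes
\[ \frac{rx_1n^2}{2(r-1)}\bigl(1-(r-1)x_1\bigr)^2 \;<\; \bigl[(r-1)-(r^2-r-1)x_1\bigr]\,k_G .\]
When the coefficient of $k_G$ is positive this is a \emph{lower} bound on $k_G$, not an upper bound; when it is nonpositive it simply contradicts the assumption $\norm{f'}<2k_G/r$ (so one would already be done), and in neither case does it bound $k_G$ from above. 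Concretely, take $r=3$, $x_1=1/10$, $k_G=0.245\,n^2$ (realized up to $o(n^2)$ by a complete multipartite graph with one part of size $n/10$ and the rest split into many small parts): your inequality reads $0.048\,n^2<1.5\,k_G$, which holds, while the desired conclusion $k_G<\frac{n^2}{4}-\frac{x_1n^2}{2}=0.2\,n^2$ fails. Your sign check is also reversed: since $(r-1)^2<r^2-r-1$ for $r\ge 3$, the coefficient $(r-1)-(r^2-r-1)x_1$ is negative at $x_1=1/(r-1)$ (its value there is $\frac{2-r}{r-1}$), so positivity only holds for $x_1<\frac{r-1}{r^2-r-1}<\frac{1}{r-1}$.

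The missing idea is the paper's second packing. Besides $f'$, the paper takes an optimal $r$-clique packing $g$ of $H=G-V_1$, which by the induction hypothesis for the pair $(r,s-1)$ satisfies $\norm{g}\ge 2k_H/r$ with $k_H$ given by \eqref{eq:k_H}, and exploits that $f'$ loads each edge of $H$ by at most $nx_1\alpha=\frac{x_1(r-2)}{1-x_1}$ to form the combined packing $f=f'+\bigl(1-\frac{x_1(r-2)}{1-x_1}\bigr)g$. The claim is then proved contrapositively: if $k_G\ge\frac{n^2}{2(r-1)}-\frac{x_1n^2}{2}$, then $\norm{f}\ge 2k_G/r$, contradicting the standing assumption. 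It is precisely the extra term involving $k_H$ (hence relation \eqref{eq:k_H}) that makes the final expression collapse to $2(r-2)\frac{k_G}{n^2}-\frac{r-2}{r-1}+(r-2)x_1$, whose nonnegativity is exactly the threshold in the claim; dropping $g$, as your proposal does, removes that term and the bound becomes unreachable.
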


\begin{proof}

First suppose $k_H > 0$. Let $g$ 
be an optimal $r$-clique packing in $H$.  By induction on $s$,  $\norm{g} \ge 2 k_H/r$.  
Let $f'$ be an $r$-clique packing of $G$ as guaranteed by Claim~\ref{clm:H_i_greedy}.  By Claim~\ref{clm:alpha}, $\alpha =  \frac{r-2}{n(1-x_1)}$ and  by Claim~\ref{clm:H_i_greedy}, for all $uv \in E(H)$, 
\[ \sum_{\substack{K \in \mathcal{K}(G) \\ uv \in E(K)}} f'(K) \le nx_1 \alpha = \frac{x_1(r-2)}{1-x_1}, \]
and
\[ \norm{f'} \ge nx_1 \alpha \frac{2t_H}{r-1} = \frac{2x_1(r-2)}{(1 - x_1)(r-1)} t_H.\]
Then,
\[f := f' + \left(1 - \frac{x_1(r-2)}{(1-x_1)} \right) g,\]
is also an $r$-clique packing of $G$. Furthermore, 
\begin{equation}\label{eq3} \norm{f} \ge \frac{2x_1(r-2)}{(1-x_1)(r-1)}t_H + \frac{2}{r} \left( 1 -   \frac{x_1(r-2)}{(1-x_1)}\right) k_H.
\end{equation}
Observe, that \eqref{eq3} holds when $k_H \le 0$, as in the above proof we could just define $g$ to be the zero map, i.e. $f=f'$. To show that $f$ is our desired $r$-clique packing of $G$, it is enough to show 
\begin{equation}\label{eq4} \frac{2x_1(r-2)}{(1-x_1)(r-1)}t_H + \frac{2}{r}\left( 1 -   \frac{x_1(r-2)}{(1-x_1)}\right)k_H \ge \frac{2}{r} k_G.\end{equation}
To conclude the proof of the claim, we show \eqref{eq4} when $k_G \ge n^2/(2(r-1)) - x_1n^2/2$. Multiplying \eqref{eq4} by $r(1-x_1)(r-1)/2n^2$ yields
\begin{equation}\label{eq:k_ub_eq}
   x_1r(r-2)\frac {t_H}{n^2} + (r-1)(1 - x_1(r-1))\frac{k_H}{n^2} \ge (1-x_1)(r-1)\frac{k_G}{n^2}. 
\end{equation}
Using \eqref{eq:t_H}
and  \eqref{eq:k_H},
the left-hand side of the inequality~\eqref{eq:k_ub_eq} is equal to the following.
\[ x_1r(r-2)\frac{k_G}{n^2} + \frac{rx_1}{2(r-1)} - rx_1^2 + \frac{x_1^3r(r-1)}{2} + (r-1)(1 - x_1(r-1))\frac{k_G}{n^2} - (1 - x_1(r-1))x_1  +  \frac{rx_1^2(1-x_1(r-1))}{2}.\]
We can subtract a $(1-x_1)(r-1)k_G/n^2$ term from both sides of the inequality, thus inequality~\eqref{eq:k_ub_eq} is equivalent to showing the following is nonnegative,
\begin{align*}
     x_1r(r-2)\frac{k_G}{n^2} + \frac{rx_1}{2(r-1)} - rx_1^2 + \frac{x_1^3r(r-1)}{2} - (r-1)(r-2)x_1\frac{k_G}{n^2} - (1 - x_1(r-1))x_1  +  \frac{rx_1^2(1-x_1(r-1))}{2}.
\end{align*}
Multiplying by $2/x_1$ yields,
\begin{align*}
     2r(r-2)\frac{k_G}{n^2} + \frac{r}{r-1} - 2rx_1 + x_1^2r(r-1) - 2(r-1)(r-2)\frac{k_G}{n^2} - 2(1 - x_1(r-1))  + rx_1(1-x_1(r-1)) \ge 0.
\end{align*}
The left hand side after simplification is 
\begin{align*}
       &= 2(r-2) \frac{k_G}{n^2} - \frac{r-2}{r-1}  - 2rx_1 + x_1^2r(r-1) + 2x_1(r-1)  + rx_1(1-x_1(r-1))\\
       &= 2(r-2) \frac{k_G}{n^2} - \frac{r-2}{r-1} +(r-2)x_1  + x_1^2r(r-1) -x_1^2r(r-1)=  2(r-2) \frac{k_G}{n^2} - \frac{r-2}{r-1} +(r-2)x_1.
\end{align*}
Thus $f$ is the desired packing if the final function is non-negative, i.e.,
\[ k_G \ge \frac{n^2}{2(r-1)} - x_1\frac{n^2}{2}.\] 
\end{proof}

\begin{claim}\label{clm:e(G)_lb}
    We have that $e(G) \ge (1-x_1)n^2/2$. 
\end{claim}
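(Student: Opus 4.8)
The plan is to prove this inequality directly from the fact that $G$ is complete multipartite, without invoking the standing assumption $\norm{f'} < 2k_G/r$; it is a purely combinatorial estimate on $e(G)$ in terms of the part sizes. Since $G$ is complete $s$-partite with parts $V_1,\ldots,V_s$, every vertex of $V_i$ has degree exactly $n-|V_i|$, so summing degrees gives
\[ 2e(G) = \sum_{i=1}^s |V_i|\,(n-|V_i|) = n^2 - \sum_{i=1}^s |V_i|^2 . \]
Hence it suffices to show $\sum_{i=1}^s|V_i|^2 \le x_1 n^2$.

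This last bound is immediate from the ordering $x_1 \ge x_2 \ge \cdots \ge x_s$: since $|V_i|\le |V_1| = x_1 n$ for every $i$, and $\sum_{i=1}^s |V_i| = n$, we get $\sum_{i=1}^s |V_i|^2 \le |V_1|\sum_{i=1}^s |V_i| = x_1 n \cdot n = x_1 n^2$. Substituting into the displayed identity yields $e(G) \ge \tfrac12\bigl(n^2 - x_1 n^2\bigr) = (1-x_1)n^2/2$, which is exactly the claim.

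I do not expect a genuine obstacle here; the estimate $\sum_i |V_i|^2 \le x_1 n^2$ is tight precisely when every part has the common size $x_1 n$. What makes the claim useful is its consequence: by \eqref{eq:e(G)} it is equivalent to $k_G \ge \tfrac{n^2}{2(r-1)} - x_1\tfrac{n^2}{2}$, which contradicts the strict inequality established in Claim~\ref{clm:k_ub}. Therefore the assumption $\norm{f'} < 2k_G/r$ introduced after Claim~\ref{clm:H_i_greedy} is untenable, so $\norm{f'} \ge 2k_G/r$; this proves $\nu_r^*(G) \ge 2k_G/r$, hence \eqref{eq:main} in the remaining case $s > r \ge 3$, and with it Theorem~\ref{thm:main}.
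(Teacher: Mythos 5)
Your proof is correct, and it takes a genuinely different and more elementary route than the paper. The paper proves the claim by a compression/shifting argument: it moves vertices between parts (which only decreases the edge count) until all parts have size $x_1 n$ except possibly one remainder part, and then computes $e(G')$ explicitly in terms of $z = 1/x_1 - \floor{1/x_1}$. You instead use the identity $2e(G) = n^2 - \sum_i |V_i|^2$ valid for any complete multipartite graph, together with the one-line bound $\sum_i |V_i|^2 \le |V_1|\sum_i |V_i| = x_1 n^2$, which follows immediately from $x_1$ being the largest part ratio. Your argument is shorter and avoids the extremal-configuration bookkeeping entirely; the paper's shifting computation does yield the marginally sharper bound $e(G)/n^2 \ge \tfrac12 - \tfrac{x_1}{2} + \tfrac{(z-z^2)x_1^2}{2}$, but that extra term is discarded in the end, so nothing is lost by your approach. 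Your closing remarks about how the claim combines with \eqref{eq:e(G)} and Claim~\ref{clm:k_ub} to finish the proof of the case $s > r \ge 3$ match the paper's concluding paragraph.
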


\begin{proof}
    First observe for arbitrary $1 \le  i < j \le s$, if $G'$ is a complete multipartite graph with parts $V_1',\ldots,V_s'$ such that for every $k \in [s]$, 
    \[ |V_k'| = \begin{cases}
        |V_i| + 1 \quad&\text{if}\quad k = i,\\
        |V_j| - 1 \quad&\text{if}\quad k = j,\\
        |V_k| \quad&\text{otherwise,}
    \end{cases}\]
    then $e(G') \le e(G)$. Applying this operation, we shall arrive to a  $G'$,  where $x_i' = x_1$ for $i \in \{1,\ldots,\floor{1/x_1}\}$ and for $i = \floor{1/x_1} + 1$, $x_i' = (1 - \floor{1/x_1}x_1)$ (possibly equal to zero).
    Let $z = 1/x_1 - \floor{1/x_1}$ and note $z \in [0,1)$. 
    Now,
    \begin{align*}
        \frac{e(G)}{n^2} &\ge \frac{e(G')}{n^2} = \binom{\floor{1/x_1}}{2}x_1^2  + (1 - \floor{1/x_1}x_1)x_1\floor{1/x_1}\\
        &= \frac{(1/x_1 - z)(1/x_1 - z - 1)}{2}x_1^2  + (1 - (1/x_1 - z)x_1)x_1(1/x_1 - z)\\
        &=  \frac{(1 - zx_1)(1 - (z+1)x_1)}{2} + (1 - (1 - x_1z))(1 - x_1z)\\
        &= \frac{1 - (2z + 1)x_1 + (z^2 + z)x_1^2}{2} + x_1z(1-x_1z)
        = \frac{1}{2} - \frac{x_1}{2} + \frac{(z - z^2)x_1^2}{2} \ge \frac{1}{2} - \frac{x_1}{2}.
    \end{align*}    
\end{proof}

By Claim~\ref{clm:e(G)_lb}, $k_G \ge (1 - x_1)n^2/2 - \left(1 - \frac{1}{r-1}\right)n^2/2$. By Claim~\ref{clm:k_ub}, 
    \[ \frac{n^2}{2(r-1)} - x_1\frac{n^2}{2} = (1 - x_1)\frac{n^2}{2} - \left(1 - \frac{1}{r-1}\right)\frac{n^2}{2} \le k_G < \frac{n^2}{2(r-1)} - x_1\frac{n^2}{2},\]
    which is a contradiction, completing the proof of our main result.

\section{Concluding remarks}\label{sec:conclusion}

Define the following function, 
\[ \phi_r(n,k) = \min_G \nu_r(G),\]
where the minimum is taken over all $n$-vertex graphs $G$ such that $e(G) = t_{r-1}(n) + k$. By Tur{\'a}n's Theorem, $\phi_r(n,0) = 0$. By Wilson's Theorem~\cite{Wilson1975}, for the complete graph $K_n$, we have that $\phi_r(n,e(K_n) - t_{r-1}(G)) = (1 - o(1))\binom{n}{2}/\binom{r}{2}= (2 - o(1))(n^2/(2r-2))/r$. By our proof of Conjecture~\ref{conj:Gyori} we have
\[ \phi_r(n,k)  \ge (2 - o(1))k/r. \]
While, our bound is asymptotically sharp  at the Tur\'an graph $T_{r-1}(n)$ and the complete graph, for the other values we do not have a conjecture. A direct consequence of Theorem~\ref{thm:stability} is $\phi_r(n,k)  = (1 - o(1))k$ when $k = o(n^2)$. Gy{\H o}ri \cite{Gyori1991} showed for fixed $r \ge 4$ and for sufficiently large $n$, $\phi_r(n,k)  = k$ when $k \le 3\floor{\frac{n+1}{r-1}} - 5$. 

For $r = 3$, the problem of determining the behavior of $\phi_3(n,k)$ dates back to Erd{\H o}s~\cite{Erdos1971}. Gy{\H o}ri \cite{Gyori1988} proved, see \cite{Gyori1992} for minor correction, $\phi_3(n,k)  = k$ if $k \le 2n - 10$ when $n$ is odd or if $k \le 1.5n-5$ when $n$ is even. A very precise result of  Gy{\H o}ri and Keszegh~\cite{GyoriKeszegh2017} claims that a $K_4$-free graph on $n^2/4+k$ edges contains $k$ pairwise edge-disjoint triangles. Here $k\le n^2/12$.

However, the $K_4$-freeness is important, as the following example shows:
Partition the vertex set of $G$ into three classes $A,B,C$, where $G[A]$ is a complete graph, and  $G[A\cup C, B]$ spans a complete bipartite graph, where $|A|=t, |B|= n/2-t/6, |C|=n/2-5t/6.$
Then $e(G)= t(t-1)/2+ n^2/4-t^2/36$, i.e., $k= 17t^2/36-t/2$. The number of triangles is at most 
\[ |\mathcal{K}_3(G)| \le f(t) := (t^2/2+tn/2- t^2/6)/3,\]
as the edges between $B$ and $C$ are not part of any triangle.
Set $t_{\varepsilon} :=(1+\varepsilon)6n/13$, where $\varepsilon>0$ is an  arbitrary small constant. For sufficiently large $n$, $f(t_\varepsilon)< (1-\varepsilon/100) k$, i.e., the number of triangles drops under $(1-o(1))k$, when $k> 17n^2/169> n^2/12$.

One could easily extend this example for larger $r$. We have not done it as we do not see any reasons why they would be best possible. It seems interesting to determine the range of $k$ when $\phi_r(n,k) = (1-o(1))k$.\\

\noindent
\textbf{Acknowledgments} We would like to thank the anonymous referees for their helpful suggestions.

\end{document}